\DeclareMathOperator{\GL}{GL}
\newcommand{\mA}{{\mathbb A}}
\newcommand{\mF}{{\mathbb F}}
\newcommand{\mG}{{\mathbb G}}
\newcommand{\mP}{{\mathbb P}}
\newcommand{\mQ}{{\mathbb Q}}
\newcommand{\mR}{{\mathbb R}}
\newcommand{\mZ}{{\mathbb Z}}
\newcommand{\F}{{\mathcal F}}
\newcommand{\N}{{\mathcal N}}
\renewcommand{\to}{\longrightarrow}
\theoremstyle{plain}
\newtheorem{thm}{Theorem}[section]
\newtheorem{lemma}[thm]{Lemma}
\newtheorem{coro}[thm]{Corollary}
\newtheorem{prop}[thm]{Proposition}
\theoremstyle{definition}
\newtheorem{rk}[thm]{Remark}
\numberwithin{equation}{section}
\begin{document}
\title[Deligne-Lusztig varieties and period domains]{Deligne-Lusztig varieties and period domains over finite fields}
\author[S. Orlik]{S. Orlik}
\address{ Universit\"at Leipzig\\
Fakult\"at f\"ur Mathematik und Informatik\\
Postfach 100920\\
 04009 Leipzig\\ Germany.}
\email{orlik@mathematik.uni-leipzig.de}
\author[M. Rapoport]{M. Rapoport}
\address{Mathematisches Institut der Universit\"at Bonn,  
Beringstrasse 1\\ 53115 Bonn\\ Germany.}
\email{rapoport@math.uni-bonn.de}

\date{\today}
 
\maketitle


\section{Introduction}
Let $G_0$  be a reductive group  over $\mF_q$. There are two classes of algebraic varieties over an algebraic closure
$\mF$ of $\mF_q$ attached to $G_0.$ Let us recall their definition. 
We set $G=G_0\times_{\mF_q} \mF.$

To $G$ there is associated {\it the} maximal torus, {\it the} Weyl  group $W$ and {\it the} set of fundamental reflections in $W$, 
cf. \cite{DL} 1.1.
Let $X=X_G$ be the set of all Borel subgroups of $G$. Then $X$ is  a smooth projective algebraic variety homogeneous
under $G$. The set of orbits of $G$ on $X\times X$ can be identified with $W,$ and this defines {\it the relative
position map} ${\rm inv}:  X \times X \rightarrow W$ (associate to an element of $X\times X$ the $G$-orbit containing
it). Let $w \in W.$ The {\it Deligne-Lusztig variety associated to} $(G_0,q,w)$ is the locally closed subset of $X$ given by
\begin{equation*} 
X(w)=X_{G_0}(w)=\{x\in X\mid {\rm inv}(x,Fx)=w\}\ .
\end{equation*}
 Here $F: X \rightarrow X$ denotes the Frobenius map over $\mF_q$. 
It is known (\cite{DL}, 1.4) that $X(w)$ is a smooth quasi-projective variety of 
dimension $\ell(w)$, which is equipped with an action of $G_0(\mF_q)$. If $F^e$ is the minimal 
power of $F$ with $F^e(w)=w$, then $X(w)$ is defined over $\mF_{q^e}.$ 

For the other class of varieties, fix a conjugacy class $\N$ of cocharacters $\nu:\mG_m \rightarrow G$. Any such
$\nu$ defines a parabolic subgroup $P_\nu$ of $G$ and all parabolics obtained from elements $\nu\in \N$ are conjugate.
Let $X_G(\N)$ be the set of these conjugates, a smooth projective algebraic variety homogeneous under $G$. Any $\nu\in \N$
defines via the adjoint representation a $\mZ$-filtration $\F_\nu$ on ${\rm Lie}(G)$, and $\nu$ is called {\it semi-stable}
if $({\rm Lie}(G_0), \F_\nu)$ is semi-stable as a $\mF_q$-vector space equipped with a $\mZ$-filtration on the corresponding
$\mF$-vector space, cf.  \cite{R, F}. This condition only depends on the point in $X(\N)$ corresponding to $\nu$
and defines an open subset $X(\N)^{\rm ss}=X_{G_0}(\N)^{\rm ss}$ of $X(\N)$, called the {\it  period domain associated with} $(G_0,q,\N)$, cf. \cite{R}.
Hence $X(\N)^{\rm ss}$ is a smooth connected quasi-projective variety of dimension
$\dim X(\N)$. It is equipped with an action of $G_0(\mF_q)$. If the conjugacy class $\N$ is defined over the extension  $\mF_{q^e}$, then $X(\N)^{\rm ss}$
is defined over $\mF_{q^e}$. 

The Drinfeld space $\Omega^n$ (relative to $\mF_q$) is a DL-variety, as well as a period domain. More precisely, let $G_0=\GL_n.$
Let $w=s_1s_2\cdots s_{n-1}=(1,2,\ldots,n)$ be the standard Coxeter element. Then $X_{G_0}(w)$ can be identified with the Drinfeld space
$$\Omega^n=\Omega^n_{\mF_q}=\mP^{n-1} \setminus \bigcup\nolimits_{H / \mF_q} H$$
(complement of all $\mF_q$-rational hyperplanes in the projective space of lines in $\mF^n$), cf. \cite{DL}, \S 2.
For any Coxeter element $w$ for $\GL_n$, the corresponding DL-variety $X(w)$ is universally homeomorphic to $\Omega^n$, cf. \cite{L'}, Prop. 1.10.
On the other hand, let us identify as usual the set of conjugacy classes $\N$ for $\GL_n$ with
$$(\mZ^n)_+=\big\{(x_1,\ldots,x_n)\in \mZ^n \mid x_1\geq x_2 \geq \ldots \geq x_n \big\}.$$
Let $(x,y^{(n-1)})\in (\mZ^n)_+$ with $x > y$ (here $y^{(n-1)}$ indicates that the entry $y$ is repeated $n-1$ times).
Then the corresponding period domain is equal to $\Omega^n$, cf. \cite{R}. Similarly, if   $(x^{(n-1)},y)\in (\mZ^n)_+$ with $x>y$, then
the corresponding period domain is isomorphic to $\Omega^n$ (it is equal to the dual $\check{\Omega}^n$, the set of hyperplanes of $\mF^n$ not containing any $\mF_q$-rational line).

In \cite{R}, \S 3, it is shown on examples that the Drinfeld space has various special features that are not shared
by more general period domains. In the present paper we exhibit another such feature: the Drinfeld space is essentially
the only period domain which is at the same time a DL-variety. Before
formulating the result, we note that both $X_{G_0}(w)$ and $X_{G_0}(\N)^{\rm ss}$ only depend on the adjoint group $G_{0\; \rm ad}$. Also,
if $G_0$ is the direct product of  groups , then the corresponding Deligne-Lusztig varieties and period domains also split
into products. Hence we may assume that $G_0$ is $\mF_q$-simple and adjoint. Then $G_0$ is of the form $G_0={\rm Res}_{\mF_{q'}/ \mF_q}(G_0')$ for an absolutely simple group $G_0'$ over $\mF_{q'}.$ Then $\N$ is of the form $(\N_1, \ldots , \N_t)$ corresponding to the $\mF_q$-embeddings of $\mF_{q'}$ into $\mF$. Here $t=|\mF_{q'}: \mF_q|$ and $\N_1, \ldots, \N_t$ are conjugacy classes of $G'$. 

The main result of this note is the following theorem.

\begin{thm}\label{Theorem}
Let $G_0$ be absolutely simple of adjoint type over $\mF_q.$ A Deligne-Lusztig variety $X_{G_0}(w)$ is never universally homeomorphic to a period domain $X_{G_0}(\N)^{\rm ss}$, unless $G_0={\rm PGL}_n$, $w$ is
a Coxeter element and $\N$ corresponds to $\nu \in (\mZ^n)_+$ of the form $\nu=(x,y^{(n-1)})$ or $\nu=(x^{(n-1)},y)$
with $x>y$, in which case $X_{G_0}(w)$ and $X_{G_0}(\N)^{\rm ss}$ are both universally homeomorphic to $\Omega^n_{\mF_q}.$

More generally, let $G_0={\rm Res}_{\mF_{q'}/ \mF_q}(G_0')$ be simple of adjoint type, and let  $t=|\mF_{q'}: \mF_q|$. Then a Deligne-Lusztig variety $X_{G_0}(w)$ is never universally homeomorphic to a period domain $X_{G_0}(\N)^{\rm ss}$, unless $G_0'={\rm PGL}_n$, $w$ is
a Coxeter element in the sense of \cite{L'}, 1.7,  and $\N$ is of the form $(\nu_1, \ldots , \nu_t)\in ((\mZ^n)_+)^t$ with $\nu_i$  scalar for all indices $i=1, \ldots , t$, except one index where the entry  is of the form $(x,y^{(n-1)})$ or $(x^{(n-1)},y)$
with $x>y$. In this case $X_{G_0}(w)$ and $X_{G_0}(\N)^{\rm ss}$ are both universally homeomorphic to $\Omega^n_{\mF_{q'}}.$

\end{thm}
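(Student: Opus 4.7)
My plan is to match invariants preserved under universal homeomorphism---dimension, $\zeta$-function over $\mF_q$ (equivalently, $\ell$-adic cohomology with compact supports as a Frobenius module), field of definition, and the induced $G_0(\mF_q)$-action---and to show that they coincide only in the Drinfeld case. The reduction to the absolutely simple adjoint case is the starting point. For $G_0=\mathrm{Res}_{\mF_{q'}/\mF_q}(G_0')$, the group $G$ decomposes over $\mF$ as a product of $t=[\mF_{q'}:\mF_q]$ copies of $G'$, and both the ambient flag variety of the DL-variety and the ambient $X_G(\N)$ decompose accordingly. The period domain $X_{G_0}(\N)^{\rm ss}$ for $\N=(\N_1,\ldots,\N_t)$ is cut out inside this product by a semi-stability condition that is coordinatewise whenever one $\N_i$ is scalar, while Lusztig's DL-variety $X_{G_0}(w)$ is a Frobenius-twisted form of a product of DL-varieties for $G'$. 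Matching dimensions and the product structure of a hypothetical universal homeomorphism forces all but one of the $\N_i$ to be scalar, and the remaining non-trivial factor reduces the problem to comparing an absolutely simple adjoint DL-variety with an absolutely simple period domain (up to a finite extension of the ground field).

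The crucial step is then a numerical comparison over $\mF_{q^{en}}$ for all $n\geq 1$. On the DL side, the Deligne--Lusztig formula (\cite{DL}, \S\,7) expresses $|X(w)(\mF_{q^{en}})|$ as a polynomial in $q^{en}$ whose leading term is $q^{en\cdot\ell(w)}$ and whose lower coefficients encode the $F$-cycle type of $w$ on the character lattice of a maximal torus. On the period-domain side, the Harder--Narasimhan stratification of $X(\N)\setminus X(\N)^{\rm ss}$ (\cite{R}) yields, by inclusion--exclusion, an expression for $|X(\N)^{\rm ss}(\mF_{q^{en}})|$ as an alternating sum over proper $\mF_q$-rational parabolics, each contribution factoring as a Gauss polynomial for $G/P$ times an unstable flag count inside the Levi. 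Running through the Dynkin classification under the a~priori constraint $\ell(w)=\dim G/P_\nu$, one compares leading and sub-leading coefficients, and divisibility by cyclotomic polynomials $\Phi_d(q)$: outside of type $A_{n-1}$ the two polynomial expressions cannot agree, and inside type $A_{n-1}$ the only surviving pair is $w$ a Coxeter element and $\nu=(x,y^{(n-1)})$ or $(x^{(n-1)},y)$ with $x>y$, i.e.\ the Drinfeld period domain.

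The main obstacle is carrying out this comparison uniformly across all simple types. The DL side is classical, but the Harder--Narasimhan count on the period-domain side involves a delicate M\"obius-type alternating sum developed in earlier work of the first author; the art is to extract just enough (top term, second coefficient, divisibility by specific $\Phi_d(q)$) to obstruct equality in every non-Drinfeld case, rather than attempting a closed-form match. Once the absolutely simple case is settled, the exception clause in the theorem is automatic: the equality in the Drinfeld case is already recalled in the introduction, and its extension to the simple (non-absolutely-simple) case is supplied by the reduction step above.
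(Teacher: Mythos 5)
Your proposal takes a genuinely different route from the paper, but it has real gaps. The paper's argument is remarkably short and avoids any point-counting: it combines the cohomology \emph{vanishing} $H^i_c(X_{G_0}(w))=0$ for $i<\ell(w)$ (Proposition~\ref{PropositionI}) with the cohomology \emph{non-vanishing} $H^{r_0}_c(X_{G_0}(\N)^{\rm ss})\neq 0$ in degree equal to the $\mF_q$-rank $r_0$ (Proposition~\ref{Non-vanishing}, where the Steinberg representation appears), and then a purely combinatorial inequality ${\rm rk}(G)\leq\dim G/P$ with equality only in the ${\rm PGL}_n$ cases (Proposition~\ref{PropositionII}). Since a universal homeomorphism preserves compactly-supported cohomology, one immediately gets the chain
\[
\dim X_{G_0}(w)\leq r_0\leq r\leq \dim X(\N),
\]
which must consist entirely of equalities, and the classification drops out. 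In the Weil-restriction case the same chain, now in the form $\dim X_{G_0}(w)\leq r_0\leq r\,t_1\leq\dim X(\N)$, forces $t_1=1$.

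You propose instead to compare zeta functions (equivalently, the whole Frobenius module $H^\ast_c$). This is a valid invariant of universal homeomorphism, but there are two concrete problems. First, the ``crucial step'' — matching Deligne--Lusztig point counts against the Harder--Narasimhan/M\"obius alternating sum for $|X(\N)^{\rm ss}(\mF_{q^{en}})|$ across all Dynkin types by looking at leading terms, sub-leading coefficients, and $\Phi_d(q)$-divisibility — is asserted but not actually carried out, and there is no evident reason it should be tractable uniformly; this is essentially the entire content of the theorem, so leaving it as a plan is a real gap. The whole point of the paper's argument is precisely to \emph{avoid} computing or comparing these two rather different polynomial expressions, by extracting a single degree of (non-)vanishing from each side. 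Second, your reduction in the Weil-restriction case leans on ``matching dimensions and the product structure of a hypothetical universal homeomorphism'' to force all but one $\N_i$ to be scalar — but a universal homeomorphism is just a finite, surjective, radicial morphism and does not obviously respect any product decomposition of the ambient flag varieties; the paper instead deduces $t_1=1$ numerically from the same rank/dimension chain, which you would need to replace by an argument you have not supplied. So: different method, genuinely interesting if it could be pushed through, but as written the decisive computations and the reduction step are both missing.
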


 This theorem comes about by comparing a cohomology vanishing theorem for the DL-varieties with a cohomology non-vanishing theorem for
 period domains. 
 In the sequel we denote for any variety $X$ over $\mF$ by $H^i_c(X)$ the $\ell$-adic cohomology group with compact supports $H^i_c(X,\overline{\mQ}_\ell).$ 
 
The vanishing theorem for DL-varieties is the following statement.
\begin{prop}\label{PropositionI}
$$H_c^i(X_{G_0}(w))=0 \mbox{ for } 0\leq i<l(w).$$
\end{prop}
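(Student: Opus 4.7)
The plan is to combine Poincar\'e duality with Artin's vanishing theorem for affine varieties. Since $X_{G_0}(w)$ is smooth of pure dimension $\ell(w)$, Poincar\'e duality in $\ell$-adic cohomology gives
$$H^i_c(X_{G_0}(w), \overline{\mQ}_\ell) \;\cong\; H^{2\ell(w)-i}(X_{G_0}(w), \overline{\mQ}_\ell)^{\vee}(-\ell(w)),$$
so the asserted vanishing $H^i_c = 0$ in the range $i < \ell(w)$ is equivalent to $H^j = 0$ in the range $j > \ell(w)$. Artin's vanishing theorem furnishes precisely the latter vanishing whenever the ambient variety is affine of dimension $\ell(w)$; hence the proposition reduces to the fact that $X_{G_0}(w)$ is affine.

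The affineness of a Deligne--Lusztig variety is a well-known result: it goes back to \cite{DL}, 2.3, for Coxeter elements and several further cases, and it has been established in full generality by the work of Bonnaf\'e--Rouquier and He. I would simply quote this statement as the external input of the argument.

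If a self-contained route is preferred, one can instead induct on $\ell(w)$ via the structural lemmas of Deligne--Lusztig. The ``rotation'' isomorphism $X(w) \cong X(swF(s))$ (valid when $\ell(swF(s)) = \ell(w)$) replaces $w$ by a twisted conjugate of the same length, while in the complementary case one obtains a locally closed partition of $X(w)$ into pieces that fibrate as $\mA^1$-, respectively $\mG_m$-bundles over Deligne--Lusztig varieties attached to strictly shorter Weyl elements. The open--closed long exact sequence in $H^*_c$, combined with the standard cohomology of $\mA^1$- and $\mG_m$-bundles, then propagates the vanishing through the induction. In either route the substantive input is structural, and the main obstacle lies in supplying it; once it is in hand, the cohomological bookkeeping is routine.
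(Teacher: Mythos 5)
Your first route hinges on the claim that the affineness of an arbitrary Deligne--Lusztig variety $X_{G_0}(w)$ ``has been established in full generality by the work of Bonnaf\'e--Rouquier and He.'' This is not correct, and the error is substantive. Bonnaf\'e--Rouquier's paper \cite{BR} concerns irreducibility, not affineness, and He's theorem \cite{H} establishes affineness only for $w$ of \emph{minimal length in its $F$-conjugacy class} (i.e.\ exactly the hypothesis ${\bf Aff}(G_0,q,w)$ that the paper is forced to assume in \S 2). Unconditional affineness of $X(w)$ for all $w$ is \emph{not} known; it holds when $q\geq h$ by \cite{DL}, Thm.~9.7, and for Coxeter elements by \cite{L'}, but the paper explicitly warns (Remark \ref{Haastertcomment}) that it expects non-affine DL-varieties to exist for small $q$ (a candidate is given in type $G_2$ over $\mF_2$). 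So the ``external input'' your first route needs does not exist as stated, and the Poincar\'e duality $+$ Artin vanishing argument only covers the $q\geq h$ range, which is how the paper itself frames it.

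Your second route is the skeleton of the paper's actual proof in \S 2, but it is missing the one genuinely hard ingredient: a base case for the length induction. The $\mA^1/\mG_m$-fibration decomposition of $X(w)$ onto strictly shorter DL-varieties is only available when there exists a simple reflection $s$ with $\ell(swF(s))<\ell(w)$; by the Geck--Kim--Pfeiffer theorem (Theorem \ref{GKP}) one can always reach, via length-preserving and length-decreasing steps, an element that is of \emph{minimal} length in its $F$-conjugacy class, and there the decomposition produces nothing and one must prove the vanishing directly. It is precisely at that point that the paper invokes affineness of $X(w')$ for minimal-length $w'$ (the hypothesis ${\bf Aff}(G_0,q,w)$, verified for split classical groups in \S 5 and later proved in general by He). Your remark that ``the cohomological bookkeeping is routine'' once the structural input is in hand is fair, but you have inverted the difficulty: the structural input at the bottom of the induction is the crux, and it is neither routine nor, in your first formulation, correctly quoted. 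The paper's stated proof of Proposition \ref{PropositionI} is in fact simply a citation to \cite{DMR}, Cor.~3.3.22; the inductive argument in \S 2 is offered as an alternative, and is explicitly conditional on ${\bf Aff}$.
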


 This vanishing property is due to  Digne, Michel and Rouquier  \cite{DMR}, Cor. 3.3.22. When 
$q\geq h$ (where $h$ denotes the Coxeter number of $G$) then all DL-varieties $X_{G_0}(w)$ are affine, cf. \cite{DL}, Thm. 9.7. In this case, the vanishing statement follows by Poincar\'e duality from a general vanishing theorem
for the \'etale cohomology of affine varieties. Before we became aware of  the paper 
\cite{DMR}, we pursued a strategy for proving Proposition \ref{PropositionI}, which relates its statement to the general problem of determining which DL-varieties are affine. Since we believe that our approach has its own merits,
we give it in \S 2. It seems more elementary than the approach in \cite{DMR}, and is also applicable to
the Deligne-Lusztig local systems on DL-varieties. However, we did not succeed completely, since we have to base ourselves on the following hypothesis.

\medskip
\noindent ${\bf Aff}(G_0,q,w)$: {\it For every $w'$ of minimal length in the $F$-conjugacy class of $w$, the corresponding DL-variety $X_{G_0}(w')$ is affine.}

\medskip  
It seems to us quite likely that this condition
is always satisfied.\footnote{ X.~He \cite{H} has recently given a proof of this conjecture which is inspired by our method in \S 5.  } Lusztig's result \cite{L'}, Cor. 2.8,  that  $X_{G_0}(w)$ is affine when $w$ is a Coxeter element may be viewed as supporting this belief. In any case,  we show that ${\bf Aff}(G_0,q,w)$ is satisfied when 
 $G_0$ is a split classical group (cf. \S 5). It is also satisfied when $G_0$ is of type $G_2$, cf. \cite{H2}, 4.18. On the other hand, we believe that the hypothesis that $w$ be of minimal length in its conjugacy class cannot be totally dropped, i.e., we believe it may happen for small $q$ that there are DL-varieties which are not affine, although we have no example for this (but a concrete candidate over the field with $2$ elements, cf. Remark \ref{Haastertcomment}).

On the other hand, there is the following non-vanishing result \cite{O}, Cor. 1.2 for period domains. Let $r_0={\rm rk}_{\mF_q} (G_0)$ denote the $\mF_q$-rank of $G_0$ (dimension of a maximal $\mF_q$-split torus of $G_0$).
\begin{prop}\label{Non-vanishing}
 Let $G_0$ be a simple group of adjoint type over $\mF_q$. If $\N$ is non-trivial, then 
\begin{equation*}
H_c^{r_0}(X_{G_0}(\N)^{\rm ss})\neq 0 \; ;
\end{equation*}
in fact, the representation of $G_0(\mF_q)$ on this cohomology group is irreducible and is equivalent to the Steinberg representation.
\end{prop}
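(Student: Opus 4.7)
The plan is to compute $H^*_c(X_{G_0}(\N)^{\rm ss})$ by comparing the ambient partial flag variety with its complement, and to identify the surviving piece in degree $r_0$ with the Steinberg representation via the combinatorics of the Tits building of $G_0$ over $\mF_q$.

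I would start by writing $Y = X_{G_0}(\N) \setminus X_{G_0}(\N)^{\rm ss}$ and invoking the long exact sequence
\begin{equation*}
\cdots \to H^{i-1}(Y) \to H^i_c(X_{G_0}(\N)^{\rm ss}) \to H^i(X_{G_0}(\N)) \to H^i(Y) \to \cdots .
\end{equation*}
The projective homogeneous variety $X_{G_0}(\N)$ has cohomology concentrated in even degrees (Bruhat cells), with $G_0(\mF_q)$-module structure induced from the trivial character of the stabiliser parabolic; in particular it carries no Steinberg constituent. The problem is thus reduced to an equivariant understanding of $H^*(Y)$.

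The next step is to stratify $Y$ by Harder--Narasimhan type: every point $x \in Y$ possesses a canonical HN parabolic $P(x)$, which is an $\mF_q$-rational standard parabolic subgroup of $G_0$, and the locus $Y_P$ with $P(x) = P$ is locally closed. Up to an affine-bundle shift, each $Y_P$ fibres over a period domain attached to the Levi $L_P$ (with the cocharacter class naturally induced from $\N$). Feeding this into a Schneider--Stuhler-style acyclic resolution of the constant sheaf on $Y$, indexed by the poset of proper $\mF_q$-rational standard parabolics, together with proper base change, yields a spectral sequence whose $E_1$-page has components of the form $\ind_{P(\mF_q)}^{G_0(\mF_q)} V$, where $V$ ranges over cohomology groups of period domains attached to proper Levi subgroups. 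Inducting on the semisimple $\mF_q$-rank and using the corresponding statement for these smaller Levis collapses the non-Steinberg components. What survives in degree $r_0$, once combined with the contribution of the ambient flag variety from the long exact sequence, is precisely the Solomon--Tits alternating sum over the standard $\mF_q$-rational parabolics of $G_0$, which by the Solomon--Tits theorem for the spherical building is the Steinberg representation; in particular it is irreducible and non-zero.

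The main obstacle, to my mind, is the careful geometric analysis of the HN strata $Y_P$: one must exhibit the Levi-fibration structure cleanly, verify that fibres are acyclic modulo affine-bundle shifts, and ensure that the resulting spectral sequence is sparse enough to degenerate and to let only the top stratum survive in degree $r_0$. Once these geometric ingredients are in place, the identification with the Steinberg representation and the non-vanishing assertion follow by formal combinatorial manipulations with the Coxeter complex of the relative Weyl group.
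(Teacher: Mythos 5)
This proposition is not proved in the paper; it is quoted from \cite{O}, Cor.\ 1.2. Your sketch --- Harder--Narasimhan stratification of the complement of the semistable locus, an acyclic ``fundamental complex'' indexed by the $\mF_q$-rational standard parabolics, a spectral sequence with $E_1$-terms parabolically induced from cohomology of period domains of Levi subgroups, induction on the $\mF_q$-rank, and the Solomon--Tits theorem to identify the surviving degree-$r_0$ contribution with the Steinberg representation --- is precisely the strategy carried out in that reference, so it is essentially the same proof rather than a new one.
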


In order to carry out the comparison between these two results, we use the following observation.

\begin{prop}\label{PropositionII} Let $G$ be a simple group of adjoint type over an algebraically closed field $k$. For any proper
parabolic subgroup $P$, the following inequality holds,
$${\rm rk} (G) \leq \dim G/P ,$$
with strict inequality, except when $G={\rm PGL}_n$ and $P$ is a parabolic subgroup of type $(n-1,1)$ or $(1,n-1).$
\end{prop}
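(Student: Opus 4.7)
The plan is to reduce to maximal proper parabolics and then verify the inequality type by type. If $P$ is a proper parabolic contained in a maximal proper parabolic $P'$, then $\dim G/P = \dim G/P' + \dim P'/P$, so $\dim G/P \geq \dim G/P'$ with strict inequality unless $P = P'$. Thus it suffices to establish $\rrk G \leq \dim G/P_\alpha$ for every standard maximal parabolic $P_\alpha$, indexed by a simple root $\alpha$, and to show that equality holds only when $G = {\rm PGL}_n$ and $\alpha$ is one of the two endpoints of the Dynkin diagram.

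Fix a maximal torus, a Borel containing it, a set of simple roots $\Delta$, and positive roots $\Phi^+$. The Levi of $P_\alpha$ has simple roots $\Delta \setminus \{\alpha\}$, so
\begin{equation*}
\dim G/P_\alpha \;=\; |\Phi^+| - |\Phi^+_{\Delta \setminus \{\alpha\}}|,
\end{equation*}
the number of positive roots with strictly positive $\alpha$-coefficient. What remains is therefore the purely combinatorial statement that $|\Phi^+| - |\Phi^+_{\Delta \setminus \{\alpha\}}| \geq |\Delta|$ for every $\alpha \in \Delta$, with equality exactly when the Dynkin diagram is of type $A_{n-1}$ and $\alpha$ is an endpoint.

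The proof is then a check through the Cartan--Killing classification. In type $A_{n-1}$ one has $\dim G/P_{\alpha_k} = k(n-k)$, and $k(n-k) - (n-1) = (k-1)(n-1-k) \geq 0$ with equality iff $k \in \{1, n-1\}$, giving the asserted extremal case. For each of the remaining simple types one selects the simple root $\alpha$ that maximises $|\Phi^+_{\Delta \setminus \{\alpha\}}|$ and hence minimises $\dim G/P_\alpha$: the resulting minima are $2n - 1$ for $B_n$ and $C_n$ with $n \geq 2$, $2(n-1)$ for $D_n$ with $n \geq 4$, and $16,\,27,\,57,\,15,\,5$ respectively for $E_6, E_7, E_8, F_4, G_2$. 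In every case these values strictly exceed the corresponding rank $r$.

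The main obstacle is not conceptual but combinatorial bookkeeping: one must correctly identify, in each non-$A$ type, which simple root produces the Levi sub-root-system with the largest positive-root count, and only then does the strict inequality become apparent. The $A_{n-1}$ computation is itself what shows that the equality case in the proposition is genuinely attained and is not an accident of the other types.
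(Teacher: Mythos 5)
Your proposal is correct and follows the same route as the paper: reduce to maximal parabolics, express $\dim G/P_\alpha = |\Phi^+| - |\Phi^+_M|$, and verify the inequality type by type through the Cartan--Killing classification, noting equality only at the endpoints of $A_{n-1}$. The only cosmetic difference is that you record the minimizing simple root and the resulting minimum in each non-$A$ type, and phrase the $A_{n-1}$ computation as $k(n-k) - (n-1) = (k-1)(n-1-k)$, whereas the paper tabulates all the differences explicitly; these are presentational variants of the same argument.
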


Our approach to  Proposition \ref{PropositionI} is given in \S 2, and the proof  of Proposition \ref{PropositionII} in \S 3. The main theorem is  proved in \S 4. In the final section \S 5, we verify the condition ${\bf Aff}(G_0,q,w)$
for split classical groups by checking the criterion of Deligne and Lusztig \cite{DL}, 9.6.
 
\smallskip

{\it Acknowledgements:} We thank L. Illusie and Th. Zink for helpful discussions on $\ell$-adic cohomology.

\section{A vanishing theorem }

Let $\mathcal F$ be a smooth $\bar \mQ_\ell$-sheaf on a connected {\it normal} variety $X$ over $\mF$. We say that $\mathcal F$ is a  {\it smooth prime-to-$p$ $\bar \mQ_\ell$-sheaf}, if it is defined by a constant tordu sheaf and the corresponding representation of the fundamental group $\pi_1(X)=\pi_1(X, x)$
on the fiber $\mathcal F_x$ at a geometric point $x$ of $X$   factors through the prime-to-$p$ part
$\pi_1(X)^{(p)}$ of $\pi_1(X)$.  This is independent of the choice of $x$. The extension of this definition to non-connected normal schemes is immediate. 

We will use the following stability property of smooth prime-to-$p$   $\bar \mQ_\ell$-sheaves. Let $S$ be a normal scheme and let $f: X\to S$ be a smooth morphism of relative dimension one, with all fibers
affine curves. We assume that $f$ factors as $f=\bar f\circ j$, where $j:X\hookrightarrow \bar X$ is an open immersion, and where $\bar f: \bar X\to S$ is proper and smooth, and such that $D=\bar X\setminus X$ is a smooth relative divisor over $S$. Let $\mathcal F$ 
be a prime-to-$p$ smooth  $\bar \mQ_\ell$-sheaf on $X$. Then $R^if_!(\mathcal F)$ is  a 
 smooth prime-to-$p$   $\bar \mQ_\ell$-sheaf on $S$ and is trivial for $i\neq 1,2$. Indeed,
$\mathcal F$ is tamely ramified along $D$, so that the smoothness of $R^if_!(\mathcal F)$ follows from \cite{SGA4'}, app. to Th. finitude, esp. 1.3.3 and 2.7.  Also, the vanishing of $R^if_!(\mathcal F)$ for $i\neq 1,2$ follows from the proper base change theorem, and the calculation of the cohomology
of affine curves. Alternatively, one may use Poincar\'e duality to reduce the question to the analogous statement concerning $R^if_\ast(\mathcal F)$ (for $i\neq 0,1$ and for the dual sheaf), and refer to \cite{SGA1}, XIII, Prop. 1.14 and Remark 1.17 for the smoothness of $R^if_\ast(\mathcal F)$,  and to loc.~cit., Thm. 2.4, 1) for the commutation of $R^if_\ast(\mathcal F)$ with base change. 
For $i=0,1$, $(R^if_\ast(\mathcal F))_s=H^i(X_s, \mathcal F)$ is equal  to the Galois cohomology group $H^{i}(\pi_1(X_s, x), \mathcal F_x)$, cf. \cite {M}, Thm. 14.14. Under this identification, the action of $\pi_1(S,s)$  is obtained from the action of $\pi_1(X, x)$ on $\mathcal F_x$  in the sense of \cite{S}, I.2.6, b)

[Illusie pointed out to us that this requires  justification. For this, it suffices to prove the analogous statement for a smooth torsion sheaf $\mathcal F$. By restricting $f$ to smaller and smaller open subsets of $S$, we may pass to the generic fiber and are then in the following situation. Let $X$ be an affine smooth curve over a field $k$ and 
let  $\mathcal F$ be a smooth torsion sheaf on $X$. Consider the exact sequence of fundamental groups
$$
1\to \pi_1(X_{\bar k}, x)\to \pi_1(X, x)\to {\rm Gal}(\bar k/k)\to 1 \ .
$$  
The \'etale cohomology groups $H^i(X_{\bar k}, \mathcal F)$ may be identified with the Galois cohomology groups $H^i(\pi_1(X_{\bar k}, x), \mathcal F_x)$ since the inverse image of $\mathcal F$ to the universal covering of $ X_{\bar k}$ is acyclic \cite {M}, Thm. 14.14. There are two actions of ${\rm Gal}(\bar k/k)$ on these cohomology groups: one on the Galois cohomology group coming from the fact that the action of $\pi_1(X_{\bar k}, x)$ on $\mathcal F_x$ comes by restricting the action of the bigger group $\pi_1(X,x)$ on $\mathcal F_x$,  and the  action of ${\rm Gal}(\bar k/k)$ on the \'etale cohomology group 
$H^i(X_{\bar k}, \mathcal F)$  by functoriality. It is obvious that these two actions coincide for $i=0$. Since the two functors arise as derived functors, the two actions coincide then for all $i$.]

Now the homomorphism $\pi_1(X,x)\to \pi_1(S,s)$ is surjective \cite{SGA1}, IX, 5.6, hence this action factors through 
$\pi_1(S, s)^{(p)}$. 

\smallskip
 
After these preliminaries, we may state the vanishing theorem.

\begin{thm}\label{vanishing}
Assume ${\bf Aff}(G_0, q, w)$. Let $\mathcal F$ be a smooth prime-to-$p$  $\bar \mQ_\ell$-sheaf on $X(w)$. Then 
$$
H^i_c(X(w), \mathcal F)=0 \ \text { for } 0\leq i<\ell(w) \ .
$$
\end{thm}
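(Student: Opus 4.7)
I plan to prove the theorem by induction on $\ell(w)$, the base case $\ell(w)=0$ being vacuous.

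\textbf{Minimal length case.} If $w$ is of minimal length in its $F$-conjugacy class, the hypothesis ${\bf Aff}(G_0,q,w)$ ensures that $X(w)$ is smooth affine of dimension $\ell(w)$. Poincar\'e duality identifies $H^i_c(X(w),\mathcal F)^{\vee}$ with $H^{2\ell(w)-i}(X(w),\mathcal F^{\vee}(\ell(w)))$, and Artin's vanishing theorem for the \'etale cohomology of affine schemes kills this group whenever $2\ell(w)-i>\ell(w)$, i.e.\ for $i<\ell(w)$.

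\textbf{Reduction step.} If $w$ is not of minimal length, the cyclic-shift theory of Deligne--Lusztig (cf.\ \cite{DL}, 1.6, and its $F$-twisted refinements) produces a simple reflection $s$ such that either $\ell(swF(s))=\ell(w)$, in which case $X(w)$ is universally homeomorphic to $X(swF(s))$ and its cohomology with coefficients in $\mathcal F$ is unchanged upon replacing $w$ by $swF(s)$, or $\ell(swF(s))<\ell(w)$, in which case $X(w)$ admits (possibly after splitting into two locally closed pieces) a smooth morphism $f:Y\to Z$ of relative dimension one with affine curve fibers isomorphic to $\mathbb A^1$ or $\mathbb G_m$, onto a DL-variety $Z$ of length $\ell(w)-1$. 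Since the Weyl group is finite, iterating the length-preserving substitutions must end either at a minimal-length representative (handled above) or at a length-decreasing step.

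\textbf{Leray spectral sequence.} In a length-decreasing step, the stability property proved at the beginning of this section shows that $R^qf_!(\mathcal F)$ is a smooth prime-to-$p$ $\bar \mQ_\ell$-sheaf on $Z$, supported in degrees $q\in\{1,2\}$. The Leray spectral sequence
$$E_2^{p,q}=H^p_c\bigl(Z,R^qf_!(\mathcal F)\bigr)\ \Longrightarrow\ H^{p+q}_c(Y,\mathcal F),$$
combined with the inductive hypothesis applied to $Z$ (a DL-variety of length $\ell(w)-1$), forces $E_2^{p,q}=0$ for $p+q<\ell(w)$, whence $H^i_c(Y,\mathcal F)=0$ for $i<\ell(w)$. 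Reassembling the pieces of $X(w)$ via the excision long exact sequence concludes the induction.

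\textbf{Main obstacle.} The decisive ingredient is the precise form of the length-decreasing step: one must identify each base $Z$ as a genuine DL-variety (possibly for a Levi subgroup of $G_0$) of length exactly $\ell(w)-1$ for which ${\bf Aff}$ holds so the inductive hypothesis applies, verify that $f$ admits the proper smooth compactification with smooth relative boundary divisor required by the preliminary discussion so that $R^qf_!(\mathcal F)$ is indeed smooth prime-to-$p$, and handle any lower-dimensional closed stratum arising in the decomposition by a separate recursive application of the same argument.
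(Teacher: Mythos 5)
Your overall strategy is exactly the paper's: induct on $\ell(w)$, handle the minimal-length case via $\textbf{Aff}$, Poincar\'e duality, and Artin vanishing for affines, and reduce the general case by cyclic $F$-shifts feeding into a Leray spectral sequence. But two of the steps you flag as ``to be verified'' are genuine gaps, and one of your assertions about the decomposition is actually false. First, the claim that ``since $W$ is finite, iterating the length-preserving substitutions must end either at a minimal-length representative or at a length-decreasing step'' is not a consequence of finiteness: a priori $w$ could be a local minimum under elementary $F$-conjugation $w\mapsto swF(s)$ without being of minimal length in its full $F$-conjugacy class. The paper invokes the theorem of Geck--Kim--Pfeiffer precisely to exclude this, and that theorem cannot be replaced by a cardinality argument.

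Second, your description of the bases of the fibrations is off in a way that matters. In the Deligne--Lusztig decomposition $X(w)=X_1\sqcup X_2$ with $w'=swF(s)$ and $\ell(w')=\ell(w)-2$, the closed piece $X_1$ is an $\mA^1$-bundle over $X(w')$, whose length is $\ell(w)-2$, not $\ell(w)-1$; only the open piece $X_2$, a $\mG_m$-bundle over $X(sw')$, sits over a base of length $\ell(w)-1$. With a base of length $\ell(w)-2$ and $R^qf_!$ allowed in degrees $q\in\{1,2\}$ as you assume, your $E_2$-vanishing count fails for $X_1$: the term $E_2^{\ell(w)-2,1}$ would contribute to $H^{\ell(w)-1}_c(X_1,\mathcal F)$. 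The argument only closes because $R^1f_!\mathcal F=0$ for the $\mA^1$-bundle: since $\pi_1(\mA^1)^{(p)}=0$, a prime-to-$p$ sheaf restricts to a constant sheaf on each fiber, and $H^1_c(\mA^1)=0$, so only $R^2f_!$ survives and its degree shift by $2$ exactly compensates the drop in length by $2$. Finally, both bases are Deligne--Lusztig varieties for $G_0$ itself, so the parenthetical ``possibly for a Levi subgroup'' is a red herring; likewise $\textbf{Aff}$ need not be checked for the bases, since the inductive hypothesis is just the vanishing statement and is available because $\textbf{Aff}(G_0,q,-)$ is a hypothesis quantified over all $w$ of minimal length.
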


For the constant sheaf $\mathcal F=\bar \mQ_\ell$, we obtain the statement of Proposition \ref{PropositionI}, except that here we have to make the hypothesis 
${\bf Aff}(G_0, q, w)$. 

Let $T_0$ be a maximal torus in $G_0$, with corresponding maximal torus $T$ of $G$.  We identify {\it the} Weyl group with the Weyl group of $T$. 
Then to every $w\in W$ and every character 
$\theta: T(\mF)^{wF}\to \bar \mQ_\ell^\times$,  Deligne and Lusztig have associated a smooth prime-to-$p$ sheaf $\mathcal F_\theta$ on $X(w)$, cf. \cite{DL},  p.111 (when $\theta$ is trivial, then $\mathcal F_\theta=\bar \mQ_\ell$).  As an application of Theorem \ref{vanishing} we have the following result.
\begin{coro}
Assume ${\bf Aff}(G_0, q, w)$. For any $\theta$ 
$$
H^i_c(X(w), \mathcal F_\theta)=0 \ \text { for } 0\leq i<\ell(w) \ .
$$
If $\theta$ is nonsingular, then 
$$
H^i_c(X(w), \mathcal F_\theta)=0 \ \text { for } i\neq \ell(w) \ .
$$
\end{coro}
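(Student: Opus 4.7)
The plan is to apply Theorem~\ref{vanishing} directly to the local system $\mathcal F_\theta$ for the first statement, and to combine it with Poincar\'e duality for the second. First I check that $\mathcal F_\theta$ is a smooth prime-to-$p$ $\bar\mQ_\ell$-sheaf on $X(w)$. By its construction in \cite{DL}, $\mathcal F_\theta$ is the $\theta$-isotypic summand of $\pi_\ast\bar\mQ_\ell$ along a finite \'etale Galois covering $\pi\colon\tilde X(w)\to X(w)$ with group $T(\mF)^{wF}$. The group $T(\mF)$ embeds into $(\mF^\times)^{\rrk T}$ and hence has no $p$-torsion, so $|T(\mF)^{wF}|$ is prime to $p$; the monodromy of $\mathcal F_\theta$ thus factors through the prime-to-$p$ quotient $\pi_1(X(w))^{(p)}$, and $\mathcal F_\theta$ satisfies the hypothesis of Theorem~\ref{vanishing}.

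The first assertion is then immediate from Theorem~\ref{vanishing} applied to $\mathcal F_\theta$. For the second assertion, note that $\theta^{-1}$ is nonsingular if $\theta$ is, so Theorem~\ref{vanishing} applied to $\mathcal F_{\theta^{-1}}$ gives $H^i_c(X(w),\mathcal F_{\theta^{-1}})=0$ for $i<\ell(w)$. Poincar\'e duality on the smooth variety $X(w)$ of pure dimension $\ell(w)$, together with the identification $\mathcal F_\theta^\vee=\mathcal F_{\theta^{-1}}$, gives a perfect pairing
\[
H^i_c(X(w),\mathcal F_\theta)^\vee\cong H^{2\ell(w)-i}(X(w),\mathcal F_{\theta^{-1}}),
\]
and hence $H^j(X(w),\mathcal F_\theta)=0$ for $j>\ell(w)$. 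The remaining half of the second assertion, namely $H^i_c(X(w),\mathcal F_\theta)=0$ for $i>\ell(w)$, is equivalent, again by Poincar\'e duality, to $H^j(X(w),\mathcal F_{\theta^{-1}})=0$ for $0\leq j<\ell(w)$, which does not follow formally from the first assertion.

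The main obstacle is this last piece, where the nonsingularity of $\theta$ is essential. My plan is to adapt the inductive argument of Theorem~\ref{vanishing} by tracking $R^if_\ast$ in place of $R^if_!$, a parallel version of which is already noted in the preliminaries preceding Theorem~\ref{vanishing}: for nonsingular $\theta$ the restriction of $\mathcal F_{\theta^{-1}}$ to each affine curve fiber of the smooth fibrations appearing in the induction has no $\pi_1$-invariants, so $R^0f_\ast\mathcal F_{\theta^{-1}}=0$ and only $R^1f_\ast$ contributes; the induction then propagates vanishing of $H^j$ below middle degree in the same way the proof of Theorem~\ref{vanishing} propagates vanishing of $H^j_c$. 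The delicate point is verifying that nonsingularity as a character of $T(\mF)^{wF}$ translates at each step of the induction into nontrivial monodromy of $\mathcal F_{\theta^{-1}}$ on the affine curve fibers that arise; this should reduce to a compatibility between the root subgroups of $T$ intervening in the fibration structure and the nonsingularity condition on $\theta$.
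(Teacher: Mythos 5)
The first assertion of the corollary is handled correctly and essentially as in the paper: you verify that $\mathcal F_\theta$ is a smooth prime-to-$p$ sheaf (the paper simply cites \cite{DL} p.~111 for this; your direct check that $T(\mF)^{wF}$ has order prime to $p$ and hence the monodromy factors through $\pi_1(X(w))^{(p)}$ is fine) and apply Theorem~\ref{vanishing}.

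The second assertion, however, is left with a genuine gap. You correctly diagnose the problem: from Theorem~\ref{vanishing} applied to $\mathcal F_{\theta^{-1}}$ and Poincar\'e duality you only get $H^j(X(w),\mathcal F_\theta)=0$ for $j>\ell(w)$, and the complementary vanishing $H^i_c(X(w),\mathcal F_\theta)=0$ for $i>\ell(w)$ is equivalent to $H^j(X(w),\mathcal F_{\theta^{-1}})=0$ for $j<\ell(w)$, which does not follow formally. But your proposed fix --- rerunning the induction of Theorem~\ref{vanishing} with $R^i f_\ast$ in place of $R^i f_!$, using nonsingularity of $\theta$ to kill $R^0 f_\ast$ on the $\mA^1$- and $\mG_m$-fibers --- is left as a sketch with the crucial point unresolved: you would need to show that at \emph{every} stage of the induction (including after repeatedly replacing $w$ by $F$-conjugates and passing to the sheaves $R^2 f_!\mathcal F$, $R^j g_!\mathcal F$, etc.)\ the restriction to each fiber has no $\pi_1$-invariants. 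This is exactly the hard content of nonsingularity, and it is not clear this compatibility can be checked without essentially reproving Deligne--Lusztig's result. The paper closes this gap at a stroke by invoking \cite{DL}, Thm.~9.8: for nonsingular $\theta$ the natural map $H^i_c(X(w),\mathcal F_\theta)\to H^i(X(w),\mathcal F_\theta)$ is an isomorphism. With that in hand, the first assertion plus Poincar\'e duality immediately gives vanishing in degrees $\neq\ell(w)$. You should cite this theorem rather than attempting to re-derive it via the induction.
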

\begin{proof}
The first statement follows from Theorem \ref{vanishing}. For the second statement, we use the fact \cite{DL}, Thm. 9.8 that if $\theta$ is nonsingular, then the natural homomorphism from $H^i_c(X(w), \mathcal F_\theta)$ to $H^i(X(w), \mathcal F_\theta)$ is an isomorphism. Therefore the assertion follows from Poincar\'e duality. 
\end{proof}
\begin{rk}
The previous statement for nonsingular $\theta$ is due to Haastert \cite{H1}, Satz 3.2,  as an application of his result that  $X(w)$ is quasi-affine, cf. \cite{H1}, Satz 2.3. He does not have to assume the hypothesis  ${\bf Aff}(G_0, q, w)$. Of course, when $X(w)$ is affine, this statement is proved in \cite{DL}. 
\end{rk}
 
For the proof of Theorem \ref{vanishing} we  first recall the following result of Geck, Kim and Pfeiffer. Denote by $S$ the set of simple reflections in $W.$ Let $w,w' \in W$ and $s\in S$. Set
$w \stackrel{s}{\rightarrow}_F w'$ if $w'=swF(s)$ and $\ell(w') \leq \ell(w)$. We write
$w \rightarrow_F w'$ if $w=w'$ or if there exist elements $s_1,\ldots,s_r\in S$ and $w=w_1,\ldots, w_r=w'\in W$  with
$w_i \stackrel{s_i}{\rightarrow}_F w_{i+1}, \,i=1,\ldots,r-1.$

\begin{thm}\label{GKP}{\rm ((\cite{GKP}, Thm. 2.6)}
Let $C$ be an $F$-conjugacy class of $W$ and let $C_{min}$ be the set of elements in $C$ of minimal length.
For any $w\in C$, there exists some $w' \in C_{min}$ such that $w\rightarrow_F w'.$
 \end{thm}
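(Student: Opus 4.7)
The plan is to proceed by induction on the quantity $\ell(w) - m$, where $m = \min\{\ell(x) \mid x \in C\}$ is the common length of the elements of $C_{min}$. If $\ell(w) = m$, then $w \in C_{min}$ and we may take $w' = w$. Otherwise, since each move $\stackrel{s}{\rightarrow}_F$ preserves the $F$-conjugacy class $C$, it suffices to produce \emph{some} chain $w = w_0 \rightarrow_F w_1 \rightarrow_F \cdots \rightarrow_F w_k$ with $\ell(w_k) < \ell(w)$, and then apply the inductive hypothesis to $w_k$.

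The problem therefore reduces to the following key claim: if $w \in C$ has non-minimal length, then by a sequence of length-non-increasing moves $\stackrel{s}{\rightarrow}_F$ one can reach an element from which a further simple reflection produces a strict length decrease. The naive attempt, namely to find a single $s \in S$ with $\ell(swF(s)) < \ell(w)$, may fail, since it can happen that every single-step $F$-conjugation from $w$ is length-preserving. One must therefore first move laterally through the level set $\{x \in C \mid \ell(x) = \ell(w)\}$ before a strict descent is visible. I would try to exhibit such a motion by fixing a reduced expression $w = s_{i_1} \cdots s_{i_r}$ and iterating left cyclic shifts: $F$-conjugating by $s_{i_1}$, which satisfies $\ell(s_{i_1} w) = \ell(w) - 1$, yields either a strict length drop (and we are done) or an element of the same length whose leading factor is now $s_{i_2}$, and we repeat.

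The main obstacle is to show that this iterated cyclic-shift process cannot continue indefinitely at the non-minimal level, i.e., that the cyclic-shift orbit of $w$ must eventually meet an element admitting a strictly length-decreasing move. I would attack this by a double induction, on $\ell(w)$ and on the rank of $W$. When the support of $w$ is a proper subset $J \subsetneq S$ stable under $F$, one restricts the analysis to the standard parabolic subgroup $W_J$ and its $F$-conjugation structure, reducing to a smaller-rank situation. When $w$ has full support, the genuine technical heart of the argument, and what I expect to be the main difficulty, lies in controlling the interaction of the braid relations with the twist $F$: using the Matsumoto-Tits theorem to pass between reduced expressions of $w$, one must show that some sequence of braid moves and cyclic shifts exposes an $s$ for which $\ell(swF(s)) < \ell(w)$. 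This forces a careful analysis inside the dihedral subgroups $\langle s, F(s) \rangle$, depending on the order of $sF(s)$ and on whether $F$ fixes $s$ or exchanges it with another simple reflection, and essentially recovers the combinatorial content of \cite{GKP}, Thm.~2.6.
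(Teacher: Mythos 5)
The paper does not prove this statement: it is imported verbatim from \cite{GKP}, Thm.~2.6, and used as a black box, so there is no in-text proof to compare your attempt against. Your outline correctly identifies the structural moves that any proof must make, namely (i)~cyclic shifting along a reduced expression to travel laterally within a length level of the $F$-conjugacy class, and (ii)~passing to a proper $F$-stable standard parabolic $W_J$ when the support of $w$ is not all of $S$. You also correctly flag that a single-step strict descent need not exist, so lateral motion is unavoidable.

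However, the crucial assertion — that for a non-minimal $w$ of full support the iterated cyclic-shift process must eventually expose a simple reflection $s$ with $\ell(swF(s)) < \ell(w)$ — is precisely the content of the theorem, and your sketch does not establish it; you say as much yourself. The double induction on $\ell(w)$ and on the rank of $W$, together with an appeal to braid moves inside dihedral subgroups $\langle s, F(s)\rangle$, is a reasonable framework but does not by itself close the gap: in \cite{GKP} (building on \cite{GP} and \cite{GP'}) this step is verified by a case-by-case analysis according to the Cartan--Killing type of $(W,S,F)$, reducing the classical types to explicit signed-permutation combinatorics and treating the exceptional types partly by machine computation. There is no uniform argument of the kind your paragraph gestures at in the literature the paper relies on, so as it stands the proposal records the shape of a proof but leaves its essential combinatorial input unproven.
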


We also recall the following lemma (Case 1 of Thm. 1.6 in \cite{DL}). 
 
\begin{lemma}
 Let $w$ and $w'$ be $F$-conjugate. Suppose that there are $w_1,w_2 \in W$ with $w_1w_2 = w$,  $w_2 F(w_1)= w'$
and $\ell(w)=\ell(w_1) + \ell(w_2)= \ell(w').$
Then $X(w)$ and $X(w')$ are  universally homeomorphic and hence $H^\ast_c(X(w), \mathcal F) \cong H^\ast_c(X(w'), \mathcal F)$ for any $\bar \mQ_\ell$-sheaf $\mathcal F$.   
\end{lemma}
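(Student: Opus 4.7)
The strategy is the classical one of \cite{DL}, 1.6, Case 1: introduce a common ``intermediate gallery'' variety
\[
Z=\{(B,B')\in X\times X\mid {\rm inv}(B,B')=w_1,\ {\rm inv}(B',FB)=w_2\},
\]
together with the two projections $\pi\colon Z\to X$, $(B,B')\mapsto B$, and $\pi'\colon Z\to X$, $(B,B')\mapsto B'$. From the length-additivity $\ell(w)=\ell(w_1)+\ell(w_2)$ and the standard multiplicative rule for relative positions (essentially the Bruhat product decomposition $Bw_1B\cdot Bw_2B=Bw_1w_2B$) one obtains ${\rm inv}(B,FB)=w_1w_2=w$, so that $\pi$ factors through $X(w)$; applying $F$ to the first defining relation gives ${\rm inv}(FB,FB')=F(w_1)$, and combined with ${\rm inv}(B',FB)=w_2$ and $\ell(w')=\ell(w_2)+\ell(F(w_1))$ this forces ${\rm inv}(B',FB')=w_2F(w_1)=w'$, so that $\pi'$ factors through $X(w')$. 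The plan is then to show that $\pi$ is an isomorphism and $\pi'$ a universal homeomorphism.

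The key algebraic input is the ``unique intermediate Borel'' statement: whenever ${\rm inv}(B,C)=w_1w_2$ with lengths adding, there is a unique $B^\sharp\in X$ with ${\rm inv}(B,B^\sharp)=w_1$ and ${\rm inv}(B^\sharp,C)=w_2$, and $B^\sharp$ depends regularly on $(B,C)$. This is again the length-additive Bruhat decomposition, translated from $G$ to the flag variety. Applied with $C=FB$ for $B\in X(w)$, it furnishes a regular section $X(w)\to Z$ inverse to $\pi$, so $\pi$ is an isomorphism of schemes.

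For $\pi'$ I would factor it as $Z\xrightarrow{\sigma}\widetilde Z\xrightarrow{\tilde\pi}X(w')$, where
\[
\widetilde Z=\{(C,B')\in X\times X\mid {\rm inv}(B',C)=w_2,\ {\rm inv}(C,FB')=F(w_1)\},
\]
$\sigma(B,B')=(FB,B')$, and $\tilde\pi(C,B')=B'$. Applying the unique-intermediate-Borel statement to the pair $(B',FB')$ with $w'=w_2\cdot F(w_1)$ produces a regular inverse of $\tilde\pi$, so $\tilde\pi$ is an isomorphism. The morphism $\sigma$ is the restriction of $F\times{\rm id}$ from $X\times X$; since $F$ is finite, radicial and surjective, so is $\sigma$, and hence $\sigma$ is a universal homeomorphism. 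Composing, $\pi'$ is a universal homeomorphism, so $X(w)\cong Z$ is universally homeomorphic to $X(w')$. Since $\ell\neq p$, universal homeomorphisms induce isomorphisms on $\ell$-adic cohomology with coefficients in any $\bar\mQ_\ell$-sheaf, yielding $H^\ast_c(X(w),\mathcal F)\cong H^\ast_c(X(w'),\mathcal F)$. The only non-trivial technical point is the regularity in the unique-intermediate-Borel statement, which is standard but is the place where the Bruhat-cell calculation must be carried out carefully; everything else is bookkeeping with relative positions and Frobenius.
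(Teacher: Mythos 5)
Your proof is correct and is essentially the argument of Deligne--Lusztig \cite{DL}, Theorem 1.6, Case~1, which the paper simply cites without reproducing; the intermediate variety $Z$, the isomorphism $\pi\colon Z\xrightarrow{\sim}X(w)$ via the unique-intermediate-Borel statement, and the factorization $\pi'=\tilde\pi\circ\sigma$ with $\tilde\pi$ an isomorphism and $\sigma$ a universal homeomorphism induced by Frobenius all match the standard treatment. One small point worth making explicit: the cleanest way to see that $\sigma$ is a universal homeomorphism is to note that $Z=(F\times\mathrm{id})^{-1}(\widetilde Z)$, so $\sigma$ is the base change of the universal homeomorphism $F\times\mathrm{id}$ along $\widetilde Z\hookrightarrow X\times X$.
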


As is well-known, this lemma has the following consequence.

\begin{lemma}\label{Lemma_gleicheLaenge}
Let $s\in S$ and let $w,w'\in W$ with $w'=swF(s)$. Suppose that $\ell(w)=\ell(w')$. 
Then $X(w)$ and $X(w')$ are  universally homeomorphic and hence $H^\ast_c(X(w), \mathcal F) \cong H^\ast_c(X(w'), \mathcal F)$ for any $\bar \mQ_\ell$-sheaf $\mathcal F$.    
\end{lemma}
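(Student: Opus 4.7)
The plan is to reduce to the previous (unnumbered) lemma by a case analysis on the lengths of $sw$ and $wF(s)$ relative to $\ell(w)$. Note that $F(s)\in S$ is an involution, so the relation $w'=swF(s)$ is symmetric: $w=sw'F(s)$ as well. Moreover $w$ and $w'$ are $F$-conjugate via $x=s$, so the previous lemma is in principle applicable.

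If $\ell(sw)=\ell(w)-1$, I would set $w_1=s$ and $w_2=sw$; then $w_1w_2=w$, $w_2F(w_1)=swF(s)=w'$, and $\ell(w_1)+\ell(w_2)=1+(\ell(w)-1)=\ell(w)=\ell(w')$, so the previous lemma applies directly. If instead $\ell(wF(s))=\ell(w)-1$, I would apply the previous lemma to the pair $(w',w)$: using $sw'=s\cdot swF(s)=wF(s)$ we get $\ell(sw')=\ell(w')-1$, and the decomposition $w_1=s$, $w_2=sw'$ of $w'$ satisfies $w_2F(w_1)=sw'F(s)=w$ and $\ell(w_1)+\ell(w_2)=\ell(w')$.

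The remaining case is $\ell(sw)=\ell(wF(s))=\ell(w)+1$, where the lemma cannot be applied directly; here my plan is to show that necessarily $w'=w$, making the conclusion trivial. Fix a reduced expression $w=r_1\cdots r_n$; then $sr_1\cdots r_n$ is reduced of length $n+1$. Since $\ell(w')=n$, the reflection $F(s)$ is a right descent of $sw$, and the strong exchange condition expresses $w'$ as the word obtained by deleting exactly one letter from $sr_1\cdots r_n$. Deleting the leading $s$ yields $w'=r_1\cdots r_n=w$, as desired. Deleting some $r_j$ instead would give $w'=su$ with $u=r_1\cdots\widehat{r_j}\cdots r_n$ of length $n-1$, and since the identity $su=swF(s)$ forces $u=wF(s)$, this would force $\ell(wF(s))=n-1$, contradicting the hypothesis.

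The first two cases are direct applications of the previous lemma; the main obstacle is the third, which hinges on the combinatorial observation (via strong exchange) that $\ell(sw)=\ell(wF(s))=\ell(w)+1$ together with $\ell(swF(s))=\ell(w)$ forces $swF(s)=w$.
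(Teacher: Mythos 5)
Your proof is correct and follows the same three-case structure as the paper, with identical decompositions in the first two cases. The only difference is Case~3: where the paper simply cites Lemma~1.6.4 of \cite{DL} to conclude $w=w'$, you derive that fact directly from the exchange condition applied to the reduced expression $sr_1\cdots r_n$ of $sw$ (with the minor caveat that deleting $r_j$ gives a word with $\ell(u)\le n-1$, not necessarily exactly $n-1$, though the contradiction with $\ell(wF(s))=n+1$ goes through unchanged), which is a clean self-contained substitute for the citation.
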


\begin{proof} We consider the following three cases.

\noindent {\bf Case 1:}  $\ell(sw)=\ell(w)-1$. Then we put $w_1=s$, $w_2=sw$ and apply the previous lemma.

\noindent {\bf Case 2:}  $\ell(wF(s))=\ell(w)-1.$  We put $w_1=s$, $w_2=sw'.$ Again we apply the previous lemma, with the roles of $w$ and $w'$ interchanged. 

\noindent {\bf Case 3:}  $\ell(sw)=\ell(w)+1$ and $\ell(wF(s))=\ell(w)+1.$ Then we  apply Lemma 1.6.4 of \cite{DL} to deduce that $w=w'.$
\end{proof}

\noindent {\it Proof of Theorem \ref{vanishing}:}
We prove the claim by  induction on $\ell(w)$.
The case $\ell(w)=0$ is trivial. 

Let $w\in W$ and suppose that the vanishing property holds for all elements in $ W$ with length smaller than $\ell(w)$.
If $w$ is minimal within its $F$-conjugacy class, then the vanishing follows by our assumption ${\bf Aff}(G_0,q,w)$ from Poincar\'e duality and a general vanishing property of affine schemes. 
If $w$ is not minimal,  there is by  Theorem \ref{GKP}   a chain  of simple reflections $s_1,\ldots,s_r\in S$  and $w=w_1,\ldots, w_r\in W$  with
$w_i \stackrel{s_i}{\rightarrow}_F w_{i+1}, \,i=1,\ldots,r-1$ such that $w_r$ is minimal. 
By Lemma \ref{Lemma_gleicheLaenge} and by induction, we may assume that $w'=swF(s)$ where $s\in S$ and $\ell(swF(s))< \ell(w)$, i.e., $\ell(w')= \ell(w)-2$. 
As in the proof of Theorem 1.6 in \cite{DL}, we may write $X(w)$ as a (set-theoretical) disjoint union  
$$X(w)=X_1 \cup X_2$$ 
where $X_1$ is closed in $X(w)$ and $X_2$ is its open complement. By applying the  long exact cohomology sequence,
it suffices to show that $H_c^i(X_1, \mathcal F_{|X_1})=0$ and $H_c^i(X_2, \mathcal F_{|X_2})=0$ for $i<\ell(w).$ Note that the restrictions $\mathcal F_{|X_1}$ and 
$\mathcal F_{|X_1}$ are also prime-to-$p$, since the corresponding representations of their fundamental groups are induced by the canonical maps $\pi_1(X_i)\to \pi(X(w)), \ i=1,2$. Now $X_1$ has the structure of an $\mA^1$-fibering over $X(w')$. Let $f: X_1\to
X(w')$ be the $\mA^1$-fibering. Consider the Leray spectral sequence
\begin{equation*}
H^i_c(X(w'), R^jf_!\mathcal F_{|X_1})\Rightarrow H^{i+j}_c(X_1, \mathcal F_{|X_1}) \ .
\end{equation*}
The stalks of $R^jf_!\mathcal F$ are isomorphic to $H^j_c(\mA^1, \mathcal F_{|\mA^1})$. Now $\pi_1(\mA^1)^{(p)}=0$, cf.  \cite{SGA1}, XIII, Cor. 2.12.  Since $\mathcal F$ is prime-to-$p$, $\mathcal F_{|\mA^1}$ is constant and $H^1_c(\mA^1, \mathcal F)=0$. We deduce that 
\begin{equation*}
H^i_c(X_1, \mathcal F)=H^{i-2}_c(X(w'), R^2f_!\mathcal F) \ .
\end{equation*}
Since $\mathcal F'=R^2f_!\mathcal F$ is a smooth prime-to-$p$ $\bar \mQ_\ell$-sheaf on $X(w')$, the induction hypothesis applies to it and it follows that $H^{i-2}(X(w'), \mathcal F')=0$ for all $i-2<\ell(w')$. 
Thus  $H^{i}_c(X_1, \mathcal F_{|X_1})=0$ for all $i<\ell(w).$

The vanishing of  $H^{i}_c(X_2, \mathcal F_{|X_2})$ is even easier. In the proof of \cite{DL}, Thm. 1.6, it is shown that $X_2$ is universally homeomorphic to a line bundle over $X(sw')$ with the zero section removed. Let $g: X_2\to X(sw')$ be the corresponding morphism. Then the Leray spectral sequence gives a long exact sequence 
$$ \cdots \rightarrow  H^{i-1}_c(X(sw'), R^1g_!\mathcal F) \rightarrow  H^{i}_c(X_2, \mathcal F_{|X_2}) 
 \rightarrow  H^{i-2}_c(X(sw'), R^2g_!\mathcal F)\rightarrow  \cdots 
 $$
We have $\ell(sw')=\ell(w')+1.$ By induction $H^{i}_c(X(sw'), R^jg_!\mathcal F )=0$ for all
$i <\ell(sw')=\ell(w)-1$ and all $j$.   Thus $H^{i-1}_c(X(sw'), R^1g_!\mathcal F)=0$ and 
$H^{i-2}_c(X(sw'), R^2g_!\mathcal F)=0$ for $i<\ell(w).$ The claim follows. \qed

\section{Proof of Proposition \ref{PropositionII}}

We retain the notation of the statement of the proposition. It obviously suffices to prove the statement for a 
{\it maximal} parabolic subgroup $P$. Let $B$ be a Borel subgroup contained in $P$
and let $T$ be a maximal torus in $B.$ Let $M$ be the Levi subgroup
of $P$ containing $T.$ Then
\begin{equation*}
\begin{aligned}
\dim G/P  =& \  \dim G/B - \dim M/M\cap B \\
 =& \ | \Phi^+ | -  |\Phi^+_M| \ ,
\end{aligned}
\end{equation*}
where $\Phi^+= \Phi^+_G$ resp. $\Phi^+_M$  denotes the set of positive roots
of $G$ resp. of $M$. The assertion is now reduced to a purely combinatorial statement that can be checked mechanically for each type in 
the tables \cite{Bou}. We adopt the notation used in these tables.

\smallskip
\noindent ${\it Type} A_{\ell}$ : Here $|\Phi^+| = \frac{\ell(\ell+1)}{2}$. If $\Delta_M$ is obtained by deleting the root $\alpha_i$, then $\Phi_M$ is of type $A_{i-1} \times A_{\ell -i}$ (with the convention $A_0=\emptyset$). Hence $|\Phi^+_M| = \frac{i(i-1)}{2}+ \frac{(\ell-i)(\ell-i+1)}{2}. $
Hence $|\Phi^+| - |\Phi^+_M| \geq \ell$, with equality iff $i=1$ or $i=\ell.$

\smallskip
\noindent ${\it Type} B_\ell \,(\ell \geq 2)$ : Here $|\Phi^+| = \ell^{2}$. If $\Delta_M$ is obtained by deleting  $\alpha_i$, then $\Phi_M$ is of type $A_{i-1} \times B_{\ell -i}$ (with the convention $B_0=\emptyset$ , $B_1 =A_1$) and 
$|\Phi^+_M| = \frac{i(i-1)}{2} + (\ell-i)^{2}. $
Hence $|\Phi^+| - |\Phi^+_M| > \ell$ in all cases. The type $C_\ell$ is identical.

\smallskip
\noindent ${\it Type} D_\ell \,(\ell \geq 4)$ : Here $|\Phi^+| = (\ell -1) \ell$. If $\Delta_M$ is obtained by deleting  $\alpha_i$, then $\Phi_M$ is of type $A_{i-1} \times D_{\ell -i}$ 
except when $i=\ell-1$ or $i=\ell$ in which case $\Phi_M$ is of type $A_{\ell-1}$, and except when $i=\ell -2$
in which case $\Phi_M$ is of type $A_{\ell-3}\times A_1 \times A_1$, and except when $i=\ell-3$ in which case $\Phi_M$ is of type $A_{\ell-4} \times A_3.$ For $1\leq i \leq \ell -4$, 
$|\Phi^+_M| = \frac{i(i-1)}{2} + (\ell-i)(\ell-i-1)$ and hence $|\Phi^+| - |\Phi^+_M| > \ell.$ 
For $i=\ell-1$ or $i = \ell$, $|\Phi^+_M| = \frac{(\ell-1)\ell}{2}$ and for $i=\ell-2$, $|\Phi^+_M| = \frac{(\ell-3)(\ell-2)}{2} +2$, and for $i=\ell -3,$ $|\Phi^+_M| = \frac{(\ell -4) (\ell-3)}{2}+6.$
In all cases $|\Phi^+| - |\Phi^+_M| > \ell.$

\smallskip
For the exceptional types one gets for the differences $|\Phi^+| - |\Phi^+_M|$, as $\Delta_M$ is obtained by deleting $\alpha_1,\ldots,\alpha_\ell$,  the following integers:

\smallskip

$E_6  :  16, 21, 25, 29, 25, 16$

\smallskip 

$E_7  :  33, 42, 47, 53 , 50, 42, 27$ 

\smallskip

$E_8  :  78, 92, 98, 106, 104, 97, 83, 57$ 

\smallskip

$F_4  :  15,20, 20,15$

\smallskip

$G_2  :  5,5 .$

\smallskip
\noindent In each case the numbers are strictly larger than the rank. 
\qed

\section{Proof of Theorem \ref{Theorem}}
Let us first treat the case when $G_0$ is absolutely simple. Let us assume that $X=X_{G_0}(w)$ is universally homeomorphic to $X_{G_0}(\N)^{\rm ss}$. By Proposition \ref{PropositionI} we have $H^i_c(X)=0$ for $i<\ell(w)=\dim X.$ Comparing with Proposition \ref{Non-vanishing} we obtain
\begin{equation*}
 \dim X_{G_0}(w) \leq r_0 \ .
 \end{equation*}
Now the relative rank $r_0$ of $G_0$ is at most the absolute rank $r$. From Proposition \ref{PropositionII} we obtain the chain of inequalities
\begin{equation}\label{rankineq}
 \dim X_{G_0}(w) \leq r_0\leq r \leq  \dim X(\N)\ .
\end{equation}

Hence all inequalities are equalities and by  Proposition \ref{PropositionII},
we have that $G={\rm PGL}_n$ and that $\N$ corresponds to $(x,y^{(n-1)})$ or $(x^{(n-1)},y)$ with $x >y$.
Indeed, the case where $\N$ corresponds to $(x^{(n)})$ is excluded, since this would imply that $\ell(w)=\dim X(\N)=0$, hence $X(w)=X(\mF_q)$ would not be connected. Also the equality $r_0=r$ implies that $G_0={\rm PGL}_n$. It follows that $X_{G_0}(\N) \cong \Omega^n$ and
$\ell(w)=n-1.$ On the other hand, since $X(w)$ is connected, $w$ has to be an elliptic element in $S_n$, i.e.,
every fundamental reflection has to appear in a minimal expression of $w$, cf. \cite{L}, p.26, and \cite{BR} (the converse is also true, but more difficult to prove).
Hence every fundamental reflection
appears exactly once, i.e. $w$ is a Coxeter element. Now, the assertion follows from the remarks in the introduction.

Now let $G_0$ be of the form  $G_0={\rm Res}_{\mF_{q'}/ \mF_q}(G_0')$, where $G_0'$
is absolutely simple of adjoint type, and let  $t=|\mF_{q'}: \mF_q|$. As in the introduction we write $\N=(\N_1, \ldots , \N_t)$, where the $\N_i$ are conjugacy classes of $G'$. Let $r$ be the absolute rank of $G_0'$. Let $t_1$ be the number of indices $i$, where $\N_i$ is nontrivial. The inequality (\ref{rankineq}) is replaced by 
\begin{equation}\label{morerankineq}
 \dim X_{G_0}(w) \leq r_0\leq r t_1\leq  \dim X(\N)\ .
\end{equation}
Since $r_0\leq r$, we deduce from the fact all inequalities in (\ref{morerankineq}) are equalities,  that $r_0=r$ and $t_1=1$ (as before the case $t_1=0$ is excluded). As in the absolutely simple case we deduce that $G_0'={\rm PGL}_n$, and that for the  one index $i$ with non-trivial $\N_i$ this conjugacy class of ${\rm PGL}_n$ corresponds to 
 $(x,y^{(n-1)})$ or $(x^{(n-1)},y)$ with $x >y$. Reasoning as before, this implies that $w$ is a Coxeter element in the sense of \cite{L'}, i.e., every $F$-orbit  of simple reflections appears precisely once in a minimal expression of $w$ as a product of simple reflections.  All these Coxeter elements define universally homeomorphic DL-varieties, cf. \cite{L'}, Prop. 1.10. To identify the variety $X=X_{G_0}(w)=X_{G_0}(\N)^{\rm ss}$ with 
 $\Omega^n_{\mF_{q'}}$, one may use either incarnation  of $X$. On the DL-side, one can use the Coxeter element $w=(w_1, \ldots , w_t)$ with $w_1=s_1s_2\ldots s_{n-1}$ and  $w_2=\ldots=w_t=1$. Since the action of $F$ on the flag variety of $G_0$ is  given by $F(B_1, \ldots , B_t)=(F^tB_t, B_1, \ldots , B_{t-1})$,
 one sees easily that $X_{G_0}(w)\simeq \Omega^n_{\mF_{q'}}$. \qed

\section{The condition {\bf Aff}$(G_0,q,w)$}

In this section  we show that the condition  ${\bf Aff}(G_0,q,w)$ is  satisfied  for classical split groups.

We shall use the following criterion of Deligne and Lusztig \cite{DL}, 9.6. Let $C\subset X_\ast(T)_{\mR}$ be 
the (open) Weyl chamber. For $w \in W$, let 
\begin{equation*}
D(C,-w^{-1}C)=\{x\in  X_\ast(T)_{\mR}\mid \alpha(x)>0\ \forall \alpha >0 \ \text{ with } w(\alpha)<0 \}\ .
 \end{equation*}
 Here $\alpha$ ranges over the roots of $T$. 

\medskip
\noindent {\bf DL-Criterion:} {\it A DL-variety $X(w)$ is affine if there exists an element $x\in D(C,-w^{-1} C)$, such that $F^\ast x -w x \in C.$}

\begin{rk}\label{Haastertcomment}
It is not clear how close the Deligne-Lusztig criterion comes to being an equivalence. In \cite{H2}, Haastert checks
that for a split classical group, every conjugacy class of $W$ contains elements which satisfy the DL-criterion. However, there are not enough elements  of minimal length among his elements: there are  elements $w$ of minimal length in their conjugacy class
such that  there is no $w'$ among Haastert's elements  with $w\rightarrow_F w'$ (e.g. consider the root system $D_\ell$ and $w=t'$ below). Still, the method used below is modelled on Haastert's calculations. For $G_0$ of type $G_2$, he shows that the DL-criterion is satisfied for all $w\in W$ and all $q$, except $q=2$ and $w=s_1s_2s_1, s_2s_1s_2$, when it is not. We expect that  these last two DL-varieties are not affine. It should be possible to check this with the help of the computer.
\end{rk}

We now consider the root system of a split classical group. In \cite{GP}, Geck and Pfeiffer construct a subset of the Weyl group  which contains enough elements of minimal length in their conjugacy class. To recall their result,  we set up the notation as follows. Let $\mR^\ell=(\mR^\ell, (\,,\,))$ be the standard euclidian vector space with standard basis $\{e_1,\ldots,e_\ell\}$. We recall the sets of simple roots and simple reflections. The extraneous elements below are introduced 
to give a reasonably uniform treatment of all types.
 
\medskip

\noindent {\it Type} $A_{\ell-1}\, (\ell \geq 2)$ : $\Delta=\{\alpha_1,\ldots,\alpha_{\ell-1}\}$, where $\alpha_i=e_i-e_{i+1}$, $S=\{s_1,\ldots,  s_{\ell-1} \}$, where
$s_i=s_{\alpha_i}$. Further we set $s_i'=1$ for all $0\leq i \leq \ell -1$. The Weyl chamber is given by
$$C=\{(x_1,\ldots,x_\ell)\in \mR^\ell \mid  x_1>x_2 > \cdots >x_{\ell-1} > x_\ell, \sum\nolimits_i x_i=0 \}.$$
\medskip
\noindent {\it Type}  $B_\ell \, (\ell \geq 2)$ : $\Delta=\{\alpha_0=e_1,\alpha_1,\ldots,\alpha_{\ell-1}\}$, $S=\{t, s_1,\ldots, s_{\ell -1}\}$ where $t=s_{e_1}$, i.e,  $t(e_i)=e_i, \forall i\neq 1$ and $t(e_1)=-e_1.$
Further, we set $s_0'=t$ and $s_i'=s_i s_{i-1} \cdots s_1 t  s_1\cdots s_{i-1}s_i$ for  $1\leq i \leq \ell-1.$
The Weyl chamber is  given by
$$C=\{(x_1,\ldots,x_\ell)\in \mR^\ell \mid x_1 >0,\, x_1>x_2 > \cdots >x_{\ell-1} > x_\ell \}.$$
\medskip
\noindent {\it Type}  $D_\ell \, (\ell \geq 4)$ : $\Delta=\{\alpha_0=e_1+e_2,\alpha_1,\ldots,\alpha_{\ell-1}\}$, $S=\{t', s_1,\ldots, s_{\ell -1}\}$ where $t'=s_{e_1+e_2}$ is the reflection with $t'(e_1+e_2)=-(e_1 + e_2).$
Further we set $s_0'=t's_1$ and $s_i'=s_{i+1} s_{i} \cdots s_2 (t'\cdot s_1)  s_2\cdots s_{i}s_{i+1}$ for  $1\leq i \leq \ell-2.$ The Weyl chamber is given by
$$C=\{(x_1,\ldots,x_\ell)\in \mR^\ell \mid x_1 +x_2 >0,\, x_1>x_2 > \cdots >x_{\ell-1} > x_\ell \}.$$

\medskip
For a decomposition $\lambda=(\lambda_1,\ldots, \lambda_k)$ of the integer  $\ell$ in the cases $A_{\ell-1}$ and $B_{\ell}$, resp. of $\ell-1$ in the case
  $D_\ell$,  and a
 vector of signs $\epsilon=(\epsilon_1,\ldots, \epsilon_k)\in \{\pm 1  \}^{k}$,  let $w_{\lambda,\epsilon}=\prod_{i=1}^k w_{\lambda_i,\epsilon_i},$ where
$$w_{\lambda_i,\epsilon_i}:=\left\{\begin{array}{ccc} s_{m_i} s_{m_i+1} \cdots s_{n_i-1} & \mbox{ if } & \epsilon_i=1 \\   s_{m_i-1}'\cdot s_{m_i} s_{m_i+1} \cdots s_{n_i-1} & \mbox{ if } & \epsilon_i=-1 
                        \end{array} \right. $$
Here we put $m_i=\sum_{j=1}^{i-1} \lambda_j +1$ , $n_i = \sum_{j=1}^{i} \lambda_j,$ $i=1,\ldots, k$ in the cases $A_{\ell-1}$ and $B_{\ell}$, resp. $m_i=\sum_{j=1}^{i-1} \lambda_j +2$ , $n_i = \sum_{j=1}^{i}\lambda_j+1$ in the case  $D_\ell$. Since the elements $w_{\lambda_i,\epsilon_i}$ commute with each other, the above product makes sense. 

\begin{prop}\label{Weylelements} {\rm (\cite{GP} Prop. 2.3) }
For each $w\in W$, there is a decomposition $\lambda=(\lambda_1,\ldots, \lambda_k)$ of $\ell$ in the cases $A_{\ell-1}$ and $B_{\ell}$, resp. of $\ell-1$ in the case
  $D_\ell$,   and a vector of signs $\epsilon=(\epsilon_1,\ldots, \epsilon_k)\in \{\pm 1  \}^{k}$ such that $w \rightarrow_F \delta \cdot w_{\lambda,\epsilon},$
where  $\delta=1$ in the case of $A_{\ell-1}$ and $B_\ell$ and  $\delta \in \{1, s_1, t'\} $ in the case of $D_\ell.$
\end{prop}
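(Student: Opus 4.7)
The plan is to combine an explicit combinatorial classification of conjugacy classes of $W$ with Theorem \ref{GKP} and the length-preserving move of Lemma \ref{Lemma_gleicheLaenge}. Since the three types under consideration are split, $F$ acts trivially on $W$, so $F$-conjugacy coincides with ordinary conjugacy, and $\rightarrow_F$ becomes the ordinary elementary conjugation relation on $W$.

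My first task would be to match the combinatorial data $(\lambda,\epsilon,\delta)$ with conjugacy classes. In type $A_{\ell-1}$, conjugacy classes of $S_\ell$ correspond to partitions of $\ell$, and $w_{\lambda,\epsilon}=\prod_i s_{m_i}\cdots s_{n_i-1}$ is visibly a product of disjoint cyclic permutations of block sizes $\lambda_i$; here the sign vector $\epsilon$ is redundant because $s_i'=1$. In type $B_\ell$, conjugacy classes are parametrized by signed partitions $(\lambda^+,\lambda^-)$; positive cycles arise from $w_{\lambda_i,+1}$, and negative cycles from $w_{\lambda_i,-1}$, the extra factor $s_{m_i-1}'$ being the reflection through $e_{m_i}$, which inserts exactly one sign change into the block $\{m_i,\ldots,n_i\}$. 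In type $D_\ell$ the same bookkeeping applies, but with the parity constraint that the number of negative parts must be even; the correcting factor $\delta\in\{1,s_1,t'\}$ handles the residual case in which the natural $B_\ell$-type representative falls outside $W(D_\ell)$.

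Having matched each conjugacy class with some $\delta w_{\lambda,\epsilon}$, I would next verify minimality of length by expanding each $s_i'$ in its reduced form $s_i s_{i-1}\cdots s_1 t s_1\cdots s_i$ (or the analogue for $D$), computing the number of positive roots sent to negative roots by the resulting signed permutation, and comparing with the standard minimal-length formula in its conjugacy class. Then, given an arbitrary $w\in W$, Theorem \ref{GKP} provides a chain $w\rightarrow_F w'$ with $w'$ of minimal length in its conjugacy class, so it suffices to show that any two minimal-length elements of a fixed conjugacy class are linked by length-preserving $\rightarrow_F$-moves. By Lemma \ref{Lemma_gleicheLaenge} this reduces to an elementary-conjugation problem inside the set of minimal-length elements, which I would handle by double induction on $\ell(w)$ and on the number of cycles, conjugating by a carefully chosen simple reflection to gradually transport the cycle structure onto consecutive blocks $\{m_i,\ldots,n_i\}$.

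The main obstacle is the bookkeeping in types $B_\ell$ and $D_\ell$: one must show that a general minimal-length element can be brought into the prescribed block-diagonal form and that, within each block of size $\lambda_i$, the sign $\epsilon_i=-1$ can always be realized by the specific expression $s_{m_i-1}'\cdot s_{m_i}\cdots s_{n_i-1}$ rather than by some other product with a different insertion point for the reflection $t$. The type $D_\ell$ parity constraint is the subtlest point: the three choices of $\delta$ arise because the set of minimal-length representatives of a $W(D_\ell)$-conjugacy class may split, in the very-even case, into two $W(D_\ell)$-orbits even though they form a single $W(B_\ell)$-orbit, and one must verify that each such orbit contains an element of the required shape.
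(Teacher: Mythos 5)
The paper does not prove Proposition \ref{Weylelements}; it is imported verbatim as \cite{GP}, Prop.\ 2.3, so there is no internal argument to compare against. What you have sketched is a reconstruction from scratch, and it has a concrete flaw that the paper itself flags.

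Your plan hinges on the intermediate claim that each $\delta\cdot w_{\lambda,\epsilon}$ is of minimal length in its conjugacy class: you say you would \emph{verify minimality} by expanding the $s_i'$ and counting inversions, and then navigate among minimal-length elements via length-preserving $\rightarrow_F$-moves. But the remark immediately following the proposition in the paper states explicitly that in types $B_\ell$ and $D_\ell$ the elements $\delta\cdot w_{\lambda,\epsilon}$ are \emph{not} all of minimal length in their conjugacy class (only in type $A_{\ell-1}$ are they). So the verification you propose fails, and your route breaks: once Theorem \ref{GKP} has carried $w$ down to an honest minimal-length element $w'$, the only $\rightarrow_F$-moves available from $w'$ are length-preserving (since $\rightarrow_F$ never increases length), and you cannot reach a non-minimal target. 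To salvage the strategy you would have to prove that in each conjugacy class \emph{some} choice of $(\lambda,\epsilon,\delta)$ gives a representative of minimal length, and this is precisely the nontrivial combinatorial content you were hoping to dispatch by a routine count.

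There is also a circularity concern. Theorem \ref{GKP} (Geck--Kim--Pfeiffer, 2000) and the statement that any two minimal-length elements of a conjugacy class are linked by length-preserving elementary conjugations (the ``cyclic shift'' theorem, \cite{GP'} Thm.\ 3.2.9) are both historically downstream of \cite{GP} Prop.\ 2.3: the 1993 proof is a direct, type-by-type combinatorial reduction of an arbitrary signed permutation to block form, with no appeal to an abstract minimality principle, and that explicit reduction is in turn an input to the later structural theorems you want to invoke. Using GKP as a black box to derive GP 2.3 is at best logically backwards and at worst circular, depending on how the connectivity result is proved. Your matching of $(\lambda,\epsilon,\delta)$ to conjugacy classes (partitions for $A_{\ell-1}$, signed partitions for $B_\ell$, the parity/very-even subtlety for $D_\ell$ handled by $\delta$) is fine and agrees with the intended bookkeeping; it is the minimality claim and the reliance on GKP plus the cyclic-shift theorem that do not hold up.
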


\begin{rk}
In the case $A_{\ell-1}$ the elements $\delta \cdot w_{\lambda,\epsilon}$ are all minimal in
their conjugacy class; this is not true in the cases $B_\ell$ and $D_\ell$. In general, not all 
elements minimal in their conjugacy class are of the form $\delta \cdot w_{\lambda,\epsilon}$. 
\end{rk}

We note that to prove the condition ${\bf Aff}(G_0,q,w)$ for all $q, w$, it suffices to prove that for the elements of the form $\delta \cdot w_{\lambda,\epsilon}$ the corresponding DL-variety
is affine. Indeed, if $w$ is an element of minimal length in its conjugacy class, then by Proposition \ref{Weylelements} we find $w'=\delta \cdot w_{\lambda,\epsilon}$ with 
 $w \rightarrow_F w'$. Since then $\ell(w)=\ell(w')$, a repeated application of Lemma \ref{Lemma_gleicheLaenge}  shows that the DL-varieties $X(w)$ and $X(w')$ are universally homeomorphic. Hence
 the fact that $X(w')$ is affine implies that $X(w)$ is affine as well.
 
We will show that $X(w)$ is affine for elements 
$w=\delta \cdot w_{\lambda,\epsilon}$ by checking  the DL-criterion for $w$. In the split case $F$-conjugacy is simply conjugacy and  the action of the Frobenius $F^\ast$ is simply the multiplication by $q$. We will in fact even show that we can find $x\in C$ such that $qx-wx\in C$.

\medskip
\noindent{\it Type} $A_{\ell-1}:$ We will use the following lemma.

\begin{lemma}\label{LemmaA_l} Let $w=s_m\cdot s_{m+1} \cdots s_{n-1}$.
Let $x_1 > x_2 > \cdots > x_m > 0$ be positive real numbers. Then there exist $x_{m+1} > x_{m+2} > \cdots > x_{n+1}>0$  
with $x_m >x_{m+1}$, such that for any choice of $x_{n+2} > x_{n+3} >  \cdots > x_{\ell-1}>x_{\ell}>0$ with $x_{n+1} > x_{n+2},$
we have
$$(qx-wx,\alpha) > 0 \ \forall \alpha\in \Delta \ \text{and } (qx-wx,\alpha_n) > x_{n+1}\ .$$ 
\noindent {\rm [if $n=\ell$, the last condition is interpreted as empty, and the other chains of inequalities are to be interpreted in the obvious way.]}
\end{lemma}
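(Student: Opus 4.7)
The plan is to unpack $w=s_m s_{m+1}\cdots s_{n-1}$ as the cyclic coordinate permutation $(m,m+1,\ldots,n)$, so that $(wx)_m=x_n$, $(wx)_j=x_{j-1}$ for $m<j\leq n$, and $(wx)_j=x_j$ elsewhere. Consequently $(qx-wx)_j=(q-1)x_j$ for $j<m$ or $j>n$, and the inequalities $(qx-wx,\alpha_i)>0$ for $i<m-1$ or $i>n$ reduce to $(q-1)(x_i-x_{i+1})>0$, which is automatic from the chain $x_1>\cdots>x_\ell>0$. The only conditions requiring work are therefore those indexed by $\alpha_{m-1},\alpha_m,\ldots,\alpha_n$, together with the strengthened requirement $(qx-wx,\alpha_n)>x_{n+1}$.

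Next I would take the ansatz $x_j=x_m-(j-m)\eta$ for $m\leq j\leq n$, with a small parameter $\eta>0$ to be fixed. A direct substitution shows that every interior inequality $(qx-wx)_i>(qx-wx)_{i+1}$ for $m\leq i<n$ collapses either to $q\eta>\eta$ (which holds since $q\geq 2$) or to $(q+n-m)\eta>0$; meanwhile the boundary inequality at $\alpha_{m-1}$, namely $x_n>qx_m-(q-1)x_{m-1}$, translates into
$$\eta<\frac{(q-1)(x_{m-1}-x_m)}{n-m},$$
which is a strictly positive upper bound thanks to the hypothesis $x_{m-1}>x_m$. Any $\eta>0$ below this threshold produces a strictly decreasing sequence $x_m>x_{m+1}>\cdots>x_n>0$ satisfying all conditions indexed by $\alpha_{m-1},\ldots,\alpha_{n-1}$.

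Finally, for $n<\ell$, the strengthened inequality $(qx-wx,\alpha_n)>x_{n+1}$ combined with the chain condition at $\alpha_n$ simplifies algebraically to the single bound $x_{n+1}<(qx-wx)_n/q$, and under the above ansatz $(qx-wx)_n=(q-1)x_m-((q-1)(n-m)+1)\eta$ is positive for $\eta$ sufficiently small, so this upper bound is a positive real number. Any choice of $x_{n+1}$ strictly between $0$ and $(qx-wx)_n/q$ works, and any further decreasing chain $x_{n+1}>x_{n+2}>\cdots>x_\ell>0$ respects the remaining (automatic) inequalities. The main subtlety is that the constraints at $\alpha_{m-1}$ and $\alpha_m$ push $x_n$ toward $x_m$ from opposite sides, leaving only a narrow admissible window; the linear-slope ansatz threads this window uniformly and reduces the entire argument to finitely many explicit upper bounds on the single parameter $\eta$, which are manifestly simultaneously satisfiable.
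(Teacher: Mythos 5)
Your proposal is correct and follows essentially the same route as the paper: compute $wx$ as the cycle $(m,m+1,\ldots,n)$, observe the constraints away from $[m-1,n]$ are automatic, use the linear ansatz $x_{m+i}=x_m-i\eta$ with $\eta>0$ small to handle $\alpha_{m-1},\ldots,\alpha_{n-1}$, and then pick $x_{n+1}$ in the resulting nonempty interval to secure $(qx-wx,\alpha_n)>x_{n+1}$. The only cosmetic difference is that the paper first reduces to $q=2$ (since each $(qx-wx,\alpha_i)$ is increasing in $q$ for $x$ in the dominant chamber), whereas you carry $q$ along explicitly; both end with the same finite collection of upper bounds on $\eta$.
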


\begin{proof}
We compute 
$$wx=(x_1,\cdots,x_{m-1},x_n,x_m,x_{m+1},\cdots, x_{n-1},x_{n+1},x_{n+2},\ldots,x_\ell)\ .$$
Thus we get for  $i\leq m-2$
\ and    for $i \geq n+1$,
$$(qx-wx,\alpha_i) =   q(x_i-x_{i+1})-(x_i -x_{i+1}) =(q-1)(x_i - x_{i+1})\ .$$ 
Furthermore, 
\begin{eqnarray}\label{identities}
\nonumber (qx-wx,\alpha_{m-1}) & = &  q(x_{m-1}-x_{m})-(x_{m-1} -x_{n}) \\
\nonumber (qx-wx,\alpha_{m}) & = &  q(x_{m}-x_{m+1})-(x_{n} -x_{m}) \\
(\nonumber qx-wx,\alpha_{m+1}) & = &  q(x_{m+1}-x_{m+2})-(x_{m} -x_{m+1}) \\
& \vdots & \\
\nonumber (qx-wx,\alpha_{n-1}) & = & q(x_{n-1}-x_{n})-(x_{n-2} -x_{n-1}) \\
\nonumber (qx-wx,\alpha_{n}) & = & q(x_{n}-x_{n+1})-(x_{n-1} -x_{n+1}) \ .\\ \nonumber 
\end{eqnarray}
We immediately see that $(qx - wx,\alpha_i)>0$ $\forall i\leq m-2$, $\forall i\geq n+1$ for any $x=(x_1>x_2>\cdots>x_\ell>0)\in \mR^\ell.$ 
For the remaining expressions, it suffices to treat the case $q=2.$
 For  $1\leq i \leq {n-m}$ set $x_{m+i}:=x_m-ia$ with $a>0$.  Then
\begin{eqnarray*} 
(2x-wx,\alpha_{m-1}) & = &  2(x_{m-1}-x_{m})-(x_{m-1} -x_{n}) = (x_{m-1}-x_{m})-(x_{m} -x_{n}) \\
& = &  (x_{m-1}-x_{m})-(n-m)a.
\end{eqnarray*}
This expression is positive if $a$ is small enough. 
The inequality  $$(2x-wx,\alpha_{m}) = 2(x_{m}-x_{m+1})-(x_{n} -x_{m}) >0 $$ is clearly satisfied since $x_m > x_n$.
Since $(2x-wx,\alpha_i)= a>0$ for $m+1 \leq i \leq n-1,$
it remains to consider the term  $(2x-wx,\alpha_{n}).$ But $(2x-wx,\alpha_{n}) = 2(x_{n}-x_{n+1})-(x_{n-1} -x_{n+1}) > x_{n+1}>0$
if $$2(x_{n}-x_{n+1})-x_{n-1}  > 0.$$
Set $x_{n+1}=x_n-b$ with $0 < b< x_n.$ Then
$2(x_n-x_{n+1})> x_{n-1}$,  provided that $b> \frac{x_{n-1}}{2}.$ If $a$ is small enough, such that $2x_n > x_{n-1}$, such  $b>0$ exists.
\end{proof}

\begin{prop}\label{ersteProp}
Let $w=w_{\lambda,\epsilon} \in W$. Then there is an $x=(x_1 > x_2 > \cdots > x_\ell>0)\in  \mR^\ell$ with  $(qx-wx,\alpha)>0$ for all $\alpha \in \Delta.$
\end{prop}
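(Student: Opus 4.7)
In type $A_{\ell-1}$ every $s_i'$ is trivial, so the sign vector $\epsilon$ is immaterial and $w = w_{\lambda,\epsilon} = \prod_{j=1}^{k} w_j$ with $w_j := s_{m_j}s_{m_j+1}\cdots s_{n_j-1}$; the factors $w_j$ pairwise commute and each $w_j$ acts only on the coordinates indexed by the block $[m_j,n_j]$, and these blocks partition $\{1,\ldots,\ell\}$. The plan is therefore to invoke Lemma~\ref{LemmaA_l} once per block, stitching the outputs together in order to build $x$ in a single left-to-right sweep.

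Concretely, I would begin with any $x_1 > 0$ and, for $j = 1, 2, \ldots, k-1$, apply Lemma~\ref{LemmaA_l} with $m = m_j$ and $n = n_j$, taking the previously produced entries $x_1, \ldots, x_{m_j}$ as its input. Because $m_{j+1} = n_j + 1$, the newly produced coordinate $x_{n_j+1}$ automatically plays the role of the starting coordinate for the next block. For the final block, where $n_k = \ell$, I would invoke the $n=\ell$ edge case of the lemma to obtain $x_{m_k+1}, \ldots, x_\ell$. This yields a strictly decreasing positive sequence $x_1 > x_2 > \cdots > x_\ell > 0$. To check $(qx - wx, \alpha_i) > 0$ for every $\alpha_i \in \Delta$, I would distinguish two types of simple roots. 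If $m_j \leq i \leq n_j - 1$ (so $\alpha_i$ is interior to block $j$), then $x_i$ and $x_{i+1}$ are fixed by every $w_{j'}$ with $j' \neq j$, so the global pairing coincides with the local one, which is positive by the lemma. If $i = n_j$ for some $j < k$ (a boundary root), a direct computation yields
\begin{equation*}
(qx - wx, \alpha_{n_j}) \;=\; (qx - w_j x, \alpha_{n_j}) \;-\; \bigl(x_{n_j+1} - x_{n_{j+1}}\bigr),
\end{equation*}
where the correction $x_{n_j+1} - x_{n_{j+1}}$ is non-negative because $m_{j+1} = n_j+1 \leq n_{j+1}$. Combining this with the sharper conclusion $(qx - w_jx, \alpha_{n_j}) > x_{n_j+1}$ supplied by Lemma~\ref{LemmaA_l} gives $(qx - wx, \alpha_{n_j}) > x_{n_{j+1}} > 0$.

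The main point to recognize---what I take to be the only real obstacle---is that the auxiliary inequality $(qx - w_jx, \alpha_{n_j}) > x_{n_j+1}$ built into Lemma~\ref{LemmaA_l} is calibrated precisely to absorb the discrepancy between the local action of $w_j$ and the global action of $w$ at a block boundary. Without this extra headroom, the boundary roots would require a separate and more delicate estimate; with it, the inductive construction is essentially mechanical, and the verification reduces to the two-case analysis above.
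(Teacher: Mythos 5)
Your proposal is correct and follows essentially the same route as the paper: apply Lemma~\ref{LemmaA_l} block by block in a left-to-right sweep, observe that interior simple roots see only the local factor $w_j$, and use the extra headroom $(qx - w_jx, \alpha_{n_j}) > x_{n_j+1}$ to absorb the correction $x_{n_j+1} - x_{n_{j+1}} = x_{n_j+1} - x_{w^{-1}(n_j+1)}$ at the block boundaries, which is exactly the computation in the paper's proof.
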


\begin{proof}
Let $w=w_{\lambda_1,\epsilon_1}\cdots w_{\lambda_k,\epsilon_k}$ and put $w_i=w_{\lambda_i,\epsilon_i}$. Note that the vector of signs $\epsilon$  does not affect
this element.
Set $x_1=1$ and apply  successively Lemma \ref{LemmaA_l} (starting with $w_1$).  
We have $(qx-wx,\alpha_k)= (qx-w_ix,\alpha_k)>0$ for $k\in [m_i,n_i-1].$ Further,
\begin{eqnarray*}
(qx-wx,\alpha_{n_i}) - (qx-w_ix,\alpha_{n_i}) & = & -(x_{w^{-1}(n_i)}- x_{w^{-1}(n_i+1)}  ) + (x_{w_i^{-1}(n_i)} - x_{n_i+1}) \\  & = & -x_{n_i+1} + x_{w^{-1}(n_i+1)}.
\end{eqnarray*}
Thus $(qx-wx,\alpha_{n_i}) > 0$,  since we arranged in Lemma \ref{LemmaA_l} that $(qx-w_ix,\alpha_{n_i})> x_{n_i+1}.$

\end{proof}
\begin{coro}
There exists $x\in C$ with  $(qx-wx,\alpha)>0$ for all $\alpha \in \Delta.$
\end{coro}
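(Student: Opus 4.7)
The plan is to derive this corollary from Proposition \ref{ersteProp} by a simple translation. The vector $x = (x_1 > x_2 > \cdots > x_\ell > 0)$ provided by Proposition \ref{ersteProp} already satisfies all the required positivity conditions $(qx - wx, \alpha) > 0$ for $\alpha \in \Delta$, but it fails to lie in the Weyl chamber $C$ of type $A_{\ell-1}$ because $\sum_i x_i > 0$, whereas $C$ is cut out inside $\{x_1 > x_2 > \cdots > x_\ell\}$ by the additional equation $\sum_i x_i = 0$.

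The key observation is that both the Weyl group action and the pairings with simple roots are insensitive to translations along the invariant line $\mR \cdot (1,\ldots,1)$. Concretely, every $w \in W = S_\ell$ acts by permuting coordinates and hence fixes $(1,\ldots,1)$, and every simple root $\alpha_i = e_i - e_{i+1}$ pairs to $0$ with $(1,\ldots,1)$.

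Thus I would set $c = \frac{1}{\ell}\sum_{i=1}^{\ell} x_i$ and define
\begin{equation*}
x' := x - c\,(1,1,\ldots,1).
\end{equation*}
The successive differences $x'_i - x'_{i+1} = x_i - x_{i+1}$ are unchanged, so $x'_1 > x'_2 > \cdots > x'_\ell$, and by construction $\sum_i x'_i = 0$, whence $x' \in C$. Moreover,
\begin{equation*}
qx' - wx' \;=\; (qx - wx) - (q-1)\,c\,(1,\ldots,1),
\end{equation*}
and pairing with any $\alpha_i \in \Delta$ annihilates the second term, giving $(qx' - wx', \alpha_i) = (qx - wx, \alpha_i) > 0$. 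There is no real obstacle; the corollary is an immediate formal consequence of Proposition \ref{ersteProp} and the fact that the root system $A_{\ell-1}$ lives in the hyperplane perpendicular to $(1,\ldots,1)$.
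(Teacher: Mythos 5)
Your proof is correct and is essentially the paper's proof, just spelled out in more detail: the paper simply adds a multiple $r\cdot(1,\ldots,1)$ to $x$ to land in $C$, and you make this explicit by taking $r=-\frac{1}{\ell}\sum_i x_i$ and verifying that $(qx-wx,\alpha_i)$ is unchanged because the $\alpha_i$ are orthogonal to $(1,\ldots,1)$ and $W=S_\ell$ fixes $(1,\ldots,1)$.
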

\begin{proof}
We add to the $x$ in Proposition \ref{ersteProp} a multiple $r\cdot(1,1,\ldots,1)$ such that $x+r\cdot(1,1,\ldots,1)\in C.$
\end{proof}

\medskip

\noindent{\it Type} $B_\ell:$ In this case we use the following lemma.

\begin{lemma}\label{LemmaB_l} Let $w=s_m\cdots s_{n-1}$ or $w=s_{m-1}'s_m\cdots s_{n-1}.$
Let $x_1 > x_2 > \cdots > x_{m-1} > x_m > 0$ be positive real numbers with $x_{m-1}> 3x_m$ if $m\geq 2.$ Then there exist 
$x_{m+1} > x_{m+2} > \cdots > x_{n+1}>0$ with $x_m >x_{m+1}$ and $x_n > 3 x_{n+1}$, such that for any choice of 
$x_{n+2}>x_{n+3} > \cdots >x_\ell>0$ with $x_{n+1} > x_{n+2},$ 
we have
$$(qx-wx,\alpha) > 0 \ \forall \alpha \in \Delta \text{ and } (qx-wx,\alpha_n) > 2x_{n+1}\ .$$ 
\noindent {\rm [if $n=\ell$, the last condition is interpreted as empty.]}
\end{lemma}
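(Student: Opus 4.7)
The plan is to follow the same strategy as in the proof of Lemma \ref{LemmaA_l}, with additional bookkeeping for the short root $\alpha_0 = e_1$ and for the negation introduced in the $\epsilon = -1$ case. The first step is to compute $wx$ explicitly. For $w = s_m \cdots s_{n-1}$ one obtains the cyclic permutation $wx = (x_1, \ldots, x_{m-1}, x_n, x_m, x_{m+1}, \ldots, x_{n-1}, x_{n+1}, \ldots, x_\ell)$ of Lemma \ref{LemmaA_l}; applying the extra reflection $s'_{m-1} = s_{e_m}$ merely negates the $m$-th coordinate, so $(wx)_m = -x_n$ in the second case while all other coordinates are unchanged. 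Consequently the identities computed in the proof of Lemma \ref{LemmaA_l} remain valid for $\alpha_i$ with $i \notin \{m-1, m\}$, and in the $\epsilon = -1$ case the terms $\pm x_n$ appearing at $\alpha_{m-1}$ and $\alpha_m$ simply switch sign.

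Next, as in Lemma \ref{LemmaA_l}, we reduce to $q = 2$ by writing $(qx - wx, \alpha) = (q-2)(x, \alpha) + (2x - wx, \alpha)$ and using $(x, \alpha) > 0$ for every positive root $\alpha$ and every $x \in C$. Following the same recipe, we set $x_{m+i} := x_m - ia$ for $1 \leq i \leq n-m$, with $a > 0$ to be fixed sufficiently small. For $m \leq i \leq n-1$ the pairings $(2x - wx, \alpha_i)$ then reduce either to expressions of order $a$ or to trivially positive ones; the $\epsilon = -1$ variant only strengthens the inequality at $\alpha_m$ by replacing the term $x_m - x_n > 0$ with $x_m + x_n > 0$.

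Three genuinely new checks remain. First, positivity of $(2x - wx, \alpha_{m-1})$ when $m \geq 2$: the $\epsilon = +1$ case reads $x_{m-1} - 2x_m + x_n > 0$, immediate from $x_{m-1} > 2x_m$, while the $\epsilon = -1$ case reads $x_{m-1} - 2x_m - x_n > 0$, which combined with $x_n < x_m$ is precisely what the hypothesis $x_{m-1} > 3x_m$ was designed to supply. Second, the short-root check at $m = 1$: $(2x - wx, \alpha_0) = 2x_1 \mp x_n$, manifestly positive in both sign choices. Third, the refined exit bound at $\alpha_n$ (when $n < \ell$): with $x_{n-1} = x_n + a$, the condition $(2x - wx, \alpha_n) > 2x_{n+1}$ becomes $x_{n+1} < (x_n - a)/3$, so one may take, e.g., $x_{n+1} := (x_n - 2a)/3$ (valid once $a < x_n/2$). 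This simultaneously ensures $x_n > 3x_{n+1}$, and the remaining pairings $(qx - wx, \alpha_i) = (q-1)(x_i - x_{i+1})$ for $i > n$ are positive for any extension $x_{n+1} > x_{n+2} > \cdots > x_\ell > 0$.

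The delicate point is the origin of the factor $3$ in both $x_{m-1} > 3x_m$ and the derived conclusion $x_n > 3x_{n+1}$: it is stronger than needed for a single block, but it is calibrated so that when the lemma is applied iteratively (in the analogue of Proposition \ref{ersteProp} for types $B_\ell$ and later $D_\ell$) the output $x_n > 3x_{n+1}$ can feed directly into the input hypothesis $x_{m-1} > 3x_m$ of the next block, even when that subsequent block carries a negation ($\epsilon_{i+1} = -1$), where a factor of $2$ would fail by exactly the extra $+x_n$ term appearing in the $\epsilon = -1$ computation.
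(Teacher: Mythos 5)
Your proof is correct and follows essentially the same line of argument as the paper: compute $wx$ (with $s'_{m-1}=s_{e_m}$ negating the $m$-th coordinate), reduce to $q=2$, set $x_{m+i}=x_m-ia$ with $a$ small, verify $(2x-wx,\alpha_{m-1})$ using the hypothesis $x_{m-1}>3x_m$, check $\alpha_0=e_1$ separately, and then choose $x_{n+1}$ small enough relative to $x_n$ to secure the strengthened exit conditions. The only differences from the paper are cosmetic — you bound $-x_n>-x_m$ directly rather than writing $x_n=x_m-(n-m)a$, and you give an explicit choice $x_{n+1}=(x_n-2a)/3$ instead of parametrizing $x_{n+1}=x_n-b$ and solving for $b$ — and your closing remark correctly identifies the role of the factor $3$ in the inductive bookkeeping of Proposition~\ref{PropositionB_l}.
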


\begin{proof} The case of $w=s_m\cdots s_{n-1}$  is similar to the one treated in Lemma \ref{LemmaA_l}.   
We only have to check in addition  that  $(qx-wx,e_1)>0$ which is clear.
      
So, let $w=s_{m-1}'s_m\cdots s_{n-1}.$
Again, it suffices to consider the case $q=2.$ We compute
$$wx=(x_1,\cdots,x_{m-1},-x_n,x_m,x_{m+1},\cdots, x_{n-1},x_{n+1},x_{n+2},\ldots,x_\ell).$$
We get the same system of identities (\ref{identities}) as in the proof of Lemma \ref{LemmaA_l} except for the first two, which now become 
\begin{equation}\label{first}
(2x-wx,\alpha_{m-1}) = 2(x_{m-1}-x_{m})-(x_{m-1} + x_{n}) 
\end{equation}
and
$$(2x-wx,\alpha_{m}) = 2(x_{m}-x_{m+1})+(x_{n}+x_{m}). $$
We also have to check that $(2x-wx,e_1)>0.$ This is easy since 
$$(2x-wx,e_1)=\left\{ \begin{array}{ccc} x_1 & \mbox{ if } & m \neq  1 \\ 2x_1+x_{n} & \mbox{ if } & m = 1 \end{array} \right. .$$
We only have to care of the first expression (\ref{first}). We set  $x_{m+i}:=x_m-ia$ with $a>0$ for $1\leq i \leq {n-m}$ and write 
\begin{eqnarray*}
(2x-wx,\alpha_{m-1}) & = & 2(x_{m-1}-x_{m})-(x_{m-1}+ x_{n}) = (x_{m-1}-x_{m})-(x_{m} + x_{n}) \\ & = & (x_{m-1}- 3x_{m})+(n-m)a .
\end{eqnarray*}
Since we have $x_{m-1} > 3 x_{m}$,  this term is positive. 
Finally, we have to show that
$$(2x-wx,\alpha_{n}) = 2(x_{n}-x_{n+1})-(x_{n-1} -x_{n+1}) > 2x_{n+1}$$
and 
$$ x_n > 3x_{n+1}.$$
Write $x_{n+1}=x_n-b$ with $0<b<x_n.$ Then the first of the above inequalities becomes
$2b > x_{n-1} + x_{n+1}$, i.e., 
$$3b > x_n+ x_{n-1} $$
and the second becomes 
$$ 3b > 2x_n.$$
Similarly as in Lemma \ref{LemmaA_l}, we can find  $b$ such that these inequalities are solvable.
\end{proof}

\begin{prop}\label{PropositionB_l}
Let $w=w_{\lambda,\epsilon} \in W$. Then there is an $x=(x_1 > x_2 > \cdots > x_\ell >0)\in \mR^\ell$ with $(qx-wx,\alpha)>0$ 
for all $\alpha \in \Delta.$
\end{prop}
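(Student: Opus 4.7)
The plan is to follow the template of the proof of Proposition \ref{ersteProp}, substituting Lemma \ref{LemmaB_l} for Lemma \ref{LemmaA_l}, and handling two features specific to type $B_\ell$: the short root $\alpha_0 = e_1$, and the sign-flipping reflection $s'_{m_i - 1}$ that appears when $\epsilon_i = -1$. Write $w = w_1 \cdots w_k$ with $w_i = w_{\lambda_i, \epsilon_i}$. Since the factors $w_i$ act on pairwise disjoint coordinate ranges $[m_i, n_i]$, they commute, and $(wx)_j = (w_i x)_j$ for $j \in [m_i, n_i]$ while $(wx)_j = x_j$ at positions fixed by every $w_i$.

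I would construct $x$ inductively. Start from any $x_1 > 0$; at step $i$ apply Lemma \ref{LemmaB_l} using the already-fixed $x_1, \ldots, x_{m_i}$ as the head. The precondition $x_{m_i - 1} > 3 x_{m_i}$ required for $i \geq 2$ is exactly the output $x_{n_{i - 1}} > 3 x_{n_{i - 1} + 1}$ of step $i - 1$, and at $i = 1$ it is vacuous since $m_1 = 1$. The lemma returns $x_{m_i + 1}, \ldots, x_{n_i + 1}$ (strictly decreasing and positive), the crucial ratio $x_{n_i} > 3 x_{n_i + 1}$ to feed the next step, the positivity $(qx - w_i x, \alpha) > 0$ for all $\alpha \in \Delta$, and the sharper estimate $(qx - w_i x, \alpha_{n_i}) > 2 x_{n_i + 1}$. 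Since $\sum_j \lambda_j = \ell$ we have $n_k = \ell$, so after step $k$ every coordinate of $x$ is assigned.

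To promote these local inequalities to the required global inequalities $(qx - wx, \alpha) > 0$ I distinguish three classes of simple roots. For an interior root $\alpha_k$ with $m_i \leq k \leq n_i - 1$, neither $(wx)_k$ nor $(wx)_{k+1}$ is affected by any $w_j$ with $j \neq i$, so $(qx - wx, \alpha_k) = (qx - w_i x, \alpha_k) > 0$ by the lemma. For the short root $e_1$, since $m_1 = 1$ one has $(wx)_1 = (w_1 x)_1$, and positivity is the statement $(qx - w_1 x, e_1) > 0$ already verified inside Lemma \ref{LemmaB_l}. For a boundary root $\alpha_{n_i}$ with $i < k$, a short computation using the cyclic shift of $w_{i+1}$ together with the possible sign flip at $m_{i+1} = n_i + 1$ yields
\begin{equation*}
(qx - wx, \alpha_{n_i}) - (qx - w_i x, \alpha_{n_i}) = x_{n_i + 1} - \epsilon_{i+1} x_{n_{i+1}},
\end{equation*}
which is positive in either sign case because $x_{n_i + 1} > x_{n_{i+1}} > 0$; combined with the lemma this gives $(qx - wx, \alpha_{n_i}) > 0$. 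For $i = k$ no correction occurs.

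The main obstacle is really just the bookkeeping of the interlocking constraints $x_{n_i} > 3 x_{n_i + 1}$: each step must both consume such a ratio as precondition and produce it as conclusion. The factor $3$ (rather than $2$) is forced by the sign-flipping case $\epsilon_i = -1$, where $(qx - w_i x, \alpha_{m_i - 1})$ picks up an extra $+ x_{n_i}$ that must be absorbed; Lemma \ref{LemmaB_l} has been arranged precisely so that this same constant is both produced and consumed, which is exactly what makes the induction self-consistent. Beyond this chaining, everything is a faithful adaptation of the proof of Proposition \ref{ersteProp}.
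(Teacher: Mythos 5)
Your overall strategy — apply Lemma \ref{LemmaB_l} block by block, chain the ratio condition $x_{n_i} > 3x_{n_i+1}$ into the precondition $x_{m_{i+1}-1} > 3x_{m_{i+1}}$ of the next step, and then split the verification of the simple-root inequalities into interior roots, the short root $e_1$, and the boundary roots $\alpha_{n_i}$ — is exactly the paper's. However, there is a genuine error in the boundary computation that invalidates the argument as written.

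The sign of the correction term is wrong. Writing $(qx-wx,\alpha_{n_i})-(qx-w_ix,\alpha_{n_i}) = (w_ix-wx,\alpha_{n_i})$ and noting that the $e_{n_i}$-coordinates of $w_ix$ and $wx$ agree while $(w_ix)_{n_i+1}=x_{n_i+1}$ and $(wx)_{n_i+1}=(w_{i+1}x)_{m_{i+1}}=\epsilon_{i+1}x_{n_{i+1}}$, one gets
\[
(qx-wx,\alpha_{n_i})-(qx-w_ix,\alpha_{n_i}) = -x_{n_i+1} + \epsilon_{i+1}x_{n_{i+1}},
\]
the negative of what you wrote. This quantity is \emph{nonpositive} in both sign cases: for $\epsilon_{i+1}=1$ it equals $x_{n_{i+1}}-x_{n_i+1}\leq 0$ (and it vanishes when $\lambda_{i+1}=1$, so your claimed strict inequality $x_{n_i+1}>x_{n_{i+1}}$ also fails there), and for $\epsilon_{i+1}=-1$ it equals $-x_{n_i+1}-x_{n_{i+1}}$, which can be as low as $-2x_{n_i+1}$. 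So the reasoning ``the correction is positive, hence combined with the lemma's positivity we get $(qx-wx,\alpha_{n_i})>0$'' breaks down. What actually saves the day is precisely the sharper output $(qx-w_ix,\alpha_{n_i})>2x_{n_i+1}$ of Lemma \ref{LemmaB_l}, which you listed but then attributed solely to the internal bookkeeping of the factor $3$ inside the lemma. Its real purpose is to absorb the worst-case correction $-2x_{n_i+1}$ at the boundary root: $(qx-wx,\alpha_{n_i}) > 2x_{n_i+1} - x_{n_i+1} - x_{n_{i+1}} = x_{n_i+1}-x_{n_{i+1}} \geq 0$. Once you fix the sign and replace ``the correction is positive'' by this last estimate, the proof matches the paper's.
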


\begin{proof}
The proof is the same as in the case of $A_{\ell-1}$, except that we have
$$(qx-wx,\alpha_{n_i}) - (qx-w_ix,\alpha_{n_i}) =-x_{n_i+1} \pm x_{w^{-1}(n_i+1)}.$$
Thus $(qx-wx,\alpha_{n_i}) > 0$  since we made sure in Lemma \ref{LemmaB_l} that  $(qx-w_ix,\alpha_{n_i})> 2x_{n_i+1}.$
\end{proof}
Note that the $x$ in Proposition \ref{PropositionB_l} lies in $C$. 

\medskip

\noindent{\it Type} $D_\ell:$ In this case we prove the following proposition.

\begin{prop}\label{PropositionD_l}
Let $w=\delta w_{\lambda,\epsilon} \in W$. Then there is an $x=(x_1 > x_2 > \cdots > x_\ell >0) \in \mR^\ell$ with $(qx-wx,\alpha)>0$ for all 
$\alpha \in \Delta.$
\end{prop}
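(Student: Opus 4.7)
The strategy is to mimic the proofs given for $A_{\ell-1}$ and $B_\ell$. First I would establish a $D_\ell$-analogue of Lemmas \ref{LemmaA_l} and \ref{LemmaB_l} that handles a single building block $w_{\lambda_i,\epsilon_i}$; then iterate over the commuting factors $w_{\lambda_1,\epsilon_1},\ldots,w_{\lambda_k,\epsilon_k}$ as in the proofs of Propositions \ref{ersteProp} and \ref{PropositionB_l} to produce $x\in\mR^\ell$ with $x_1>x_2>\cdots>x_\ell>0$ realising the required inequalities for $w_{\lambda,\epsilon}$; and finally absorb the prefix $\delta\in\{1,s_1,t'\}$ by readjusting $x_1,x_2$. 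The genuinely new features relative to types $A$ and $B$ are (i) the additional simple root $\alpha_0=e_1+e_2$ that must be checked; (ii) the fact that the elements $s_i'=s_{i+1}s_i\cdots s_2(t's_1)s_2\cdots s_is_{i+1}$ act as the \emph{simultaneous} sign change on $e_1$ and $e_{i+2}$ (which follows from $t'(e_1)=-e_2$ and $t'(e_2)=-e_1$), rather than a single sign change as in type $B$; and (iii) the extra prefix $\delta$.

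For the building-block lemma, the case $w=s_m\cdots s_{n-1}$ (now with $m\geq 2$) is handled verbatim as in Lemma \ref{LemmaA_l}; the extra pairing is $(qx-wx,\alpha_0)=(q-1)(x_1+x_2)>0$, which is free. For $w=s_{m-1}'s_m\cdots s_{n-1}$, the double sign change produces $(wx)_1=-x_1$ and $(wx)_{m+1}=-x_m$, while the other coordinates transform as in Lemma \ref{LemmaB_l}. A direct calculation shows that, with the choice $x_{m+k}=x_m-ka$, the two delicate inequalities $(qx-wx,\alpha_{m-1})>0$ and $(qx-wx,\alpha_m)>0$ become a pair of bounds on $a$ that are simultaneously solvable under the condition $x_{m-1}>3x_m$, exactly as in Lemma \ref{LemmaB_l}; the negation at position~$1$ only enlarges the pairings against $\alpha_0$ and $\alpha_1$, and the tail of remaining simple roots is handled as before. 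Iterating this lemma over the commuting blocks $w_{\lambda_i,\epsilon_i}$ then yields an $x$ with $x_1>\cdots>x_\ell>0$ for which $(qx-w_{\lambda,\epsilon}x,\alpha)>0$ for every $\alpha\in\Delta$.

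It remains to insert the prefix $\delta$. A direct check shows that $t'$ acts on $\span(e_1,e_2)$ as $-s_1$ and trivially on the orthogonal complement, so $s_1$ and $t'$ commute and $\delta$ modifies only $(wx)_1$ and $(wx)_2$. Consequently the inequalities $(qx-\delta wx,\alpha_i)>0$ for $i\geq 3$ remain valid, and only $\alpha_0,\alpha_1,\alpha_2$ must be re-examined. A short case analysis distinguishing $\delta\in\{s_1,t'\}$ and the two possibilities for the sign of $(wx)_1$ (which is $(-1)^Nx_1$ with $N$ the number of indices $i$ having $\epsilon_i=-1$) shows that the modified inequalities at $\alpha_0,\alpha_1,\alpha_2$ can all be secured by taking $x_1$ much larger than $x_2$. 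The principal obstacle in the whole argument is the bookkeeping of this coupling: the coordinate $x_1$ is simultaneously tied to every $x_{m_i+1}$ via the double sign changes $s_{m_i-1}'$ and to $x_2$ via $\delta$, so the iterative construction must produce a super-decreasing sequence $x_1\gg x_2\gg\cdots\gg x_\ell>0$ that meets both the $x_{m-1}>3x_m$ bounds coming from each negative block and the $x_1\gg x_2$ bound demanded by the prefix.
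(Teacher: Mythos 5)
Your outline follows the same broad strategy as the paper (handle one building block, iterate over commuting factors, then deal with the prefix $\delta$), but it misses the paper's key structural observation and glosses over the arithmetic that this observation makes unnecessary.

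The paper does \emph{not} prove a fresh $D_\ell$ building-block lemma. Instead, after computing the action of $s_i'$ (sign change on $e_1$ and $e_{i+2}$), it writes down the action of $w_{\lambda,\epsilon}$ explicitly and observes that this element \emph{coincides} with the type-$B_\ell$ representative $w_{\tilde\lambda,\tilde\epsilon}$, where $\tilde\lambda=(1,\lambda)$ and $\tilde\epsilon=((-1)^{|\epsilon|},\epsilon)$: all the $e_1$-flips contributed by the various negative blocks compress into a single parity factor $(-1)^{|\epsilon|}$ sitting on the coordinate $1$, and the block action on positions $m_i,\dots,n_i$ is literally the type-$B$ one. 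This identification is what lets the paper invoke Lemma~\ref{LemmaB_l} and Proposition~\ref{PropositionB_l} essentially verbatim, leaving only the single new simple root $\alpha_0=e_1+e_2$ and the prefix $\delta$ to be checked by hand. The paper then reduces to $\lambda=(\lambda_1,1,\dots,1)$ and runs a short case analysis on $\delta\in\{1,s_1,t'\}$ and on $m_1=2$ versus $m_1>2$, with further subcases on the parities of $|\epsilon|$ and $\epsilon_1$.

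Your version, by contrast, leaves a gap precisely at the arithmetic you wave at. You correctly note that the sign change in $s_{m-1}'$ lands on $e_1$ and $e_{m+1}$, so that $(wx)_{m+1}=-x_m$; but then the roots whose pairings acquire a $\pm x_m$ are $\alpha_m$ and $\alpha_{m+1}$, \emph{not} $\alpha_{m-1}$ and $\alpha_m$ as in type $B$, and the resulting inequalities do not take the form of Lemma~\ref{LemmaB_l}. In particular $(qx-wx,\alpha_m)=(q-1)x_m-qx_{m+1}-x_n$ forces a \emph{large} initial gap $x_m-x_{m+1}$, which then clashes with the chain conditions $q(x_i-x_{i+1})>x_{i-1}-x_i$ downstream unless one re-balances the whole sequence. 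So the assertion that the bounds are ``simultaneously solvable under $x_{m-1}>3x_m$, exactly as in Lemma~\ref{LemmaB_l}'' is not something one can take for granted; it requires its own verification with different constants. Similarly, absorbing $\delta$ by merely enlarging $x_1$ relative to $x_2$ is too coarse: when $m_1=2$ both $\delta$ and the first block's sign flip act on the coordinates $1,2$, and the resulting inequalities at $\alpha_0,\alpha_1,\alpha_2$ depend on the parity of $|\epsilon|$ and of $\epsilon_1$ in a way that the paper must (and does) split into explicit subcases. Your plan can in principle be carried out, but without the type-$D\leftrightarrow$type-$B$ identification it requires an independent recalculation of all the block inequalities, which you have not done; and the claim that they come out ``exactly as in Lemma~\ref{LemmaB_l}'' is false as stated.
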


\begin{proof}
The element $s_0'=s_1t'$ is the reflection with $e_1 \mapsto -e_1$, $e_2 \mapsto -e_2$ and which fixes all other $e_j$. It follows that
$s_i'(e_1)=-e_1$, $s_i'(e_{i+2})=-e_{i+2}$ and $s_i'(e_j)=e_j$ for all $j\not\in \{1,i+2\}.$

Let $\lambda=(\lambda_1,\ldots, \lambda_k)$ be a decomposition of the closed interval $[2,\ell]$ and consider  a vector of signs
$\epsilon=(\epsilon_1,\ldots,\epsilon_k)$. Set $|\epsilon|:= \#\{ i\mid \epsilon_i < 0\}.$ Then one computes that the element
$w=w_{\lambda,\epsilon}$ is given by
$$e_1 \mapsto (-1)^{|\epsilon|} e_1,$$
$$ e_{m_1} \mapsto e_{m_1+1}, e_{m_1+1} \mapsto e_{m_1+2}, \ldots, e_{n_1-1} \mapsto e_{n_1}, e_{n_1} \mapsto (-1)^{\epsilon_1} e_{m_1} $$  
$$e_{m_2} \mapsto e_{m_2+1}, e_{m_2+1} \mapsto e_{m_2+2},\ldots, e_{n_2-1} \mapsto e_{n_2}, e_{n_2} \mapsto (-1)^{\epsilon_2} e_{m_2} $$ 
\centerline{\vdots}
$$e_{m_k} \mapsto e_{m_k+1}, e_{m_k+1} \mapsto e_{m_k+2},\ldots, e_{n_k-1} \mapsto e_{n_k}, e_{n_k} \mapsto (-1)^{\epsilon_k} e_{m_k} \ .$$   
It follows  that $w_{\lambda,\epsilon}$ corresponds to the element $w_{\tilde{\lambda},\tilde{\epsilon}}$ of $W(B_\ell)$ with
$\tilde{\lambda}=(1,\lambda)$ and $\tilde{\epsilon}=((-1)^{|\epsilon|},\epsilon).$

If we multiply $w_{\lambda,\epsilon}$ by $\delta\in \{1,s_1,t' \}$ from the left, only the first factor $w_{\lambda_1,\epsilon_1}$ of $w_{\lambda,\epsilon}$ is affected.
In particular, we may reduce by Lemma \ref{LemmaB_l} to the case $\lambda=(\lambda_1,1,\ldots,1).$

\noindent {\bf Case}: $\delta=1.$ 

 Let $x\in \mR^\ell$ be  chosen as in Proposition \ref{PropositionB_l}. Then $(qx-wx,\alpha_i)> 0$ for all $i\geq 1.$
So, we only have to ensure that $(qx-wx,\alpha_0)=q(x_1+x_2)- ((-1)^{|\epsilon|}x_1\pm x_{w^{-1}(2)}) > 0$ which is clearly satisfied.

\medskip

\noindent {\bf Case}: $\delta=s_1.$

\noindent {\bf Subcase}: $m_1=2.$ 
Then $w=\delta\cdot w_{\lambda_1,\epsilon_1}$ is given by
$$e_1 \mapsto (-1)^{|\epsilon|} e_2,$$
$$ e_{2} \mapsto e_{3}, e_{3} \mapsto e_{4}, \ldots, e_{n_1-1} \mapsto e_{n_1}, e_{n_1} \mapsto (-1)^{\epsilon_1} e_{1} \ .$$  
If $|\epsilon|$ is even, this case is treated in Lemma \ref{LemmaB_l}.
So, let $|\epsilon|$ be odd. We compute
$$wx=((-1)^{\epsilon_1}x_{n_1},-x_1,x_2,x_3,\ldots,x_{n_1-1},x_{n_1+1},\ldots,x_\ell) \ .$$
Hence
\begin{eqnarray*}
(qx-wx,\alpha_0) & = &  q(x_1+x_{2})+ (x_1 + (-1)^{\epsilon_1+1}x_{n_1}) > 0 \\ 
(qx-wx,\alpha_1) & = &  q(x_1-x_{2}) - x_1 - (-1)^{\epsilon_1}x_{n_1} \\ 
(qx-wx,\alpha_2) & = &  q(x_2-x_{3}) + (x_1 + x_2) > 0 \\
(qx-wx,\alpha_3) & = &  q(x_3-x_{4}) - (x_2 - x_3) \\
& \vdots & \\
(qx-wx,\alpha_{n_1-1}) & = &  q(x_{n_1-1}-x_{n_1}) - (x_{n_1-2} - x_{n_1-1}) \\
(qx-wx,\alpha_{n_1}) & = &  q(x_{n_1}-x_{n_1+1}) - (x_{n_1-1} - x_{n_1+1}) \\
\end{eqnarray*}
If $\epsilon_1$ is even then we choose $x_1>x_2 > \cdots > x_{n}>0$ in the following way. Let $x_2>0$ be arbitrary and set as in Lemma \ref{LemmaB_l}
 $x_{2+i}=x_2-ia, \, i=0,\ldots,n-3,$ with $a>0$ small enough. Further, let $x_n >0$ be such that $x_{n-1}> 3x_n.$ Finally choose $x_1>x_2$
 such that $x_1-x_2 > x_2-x_n.$ One checks that the above expressions are  positive.

If $\epsilon_1$ is odd then one chooses $x$ similarly.

\smallskip
\noindent {\bf Subcase: $m_1 > 2.$}

In this case, one reduces by Lemma \ref{LemmaB_l} to the situation of $w=s_1$ resp. $w=s_1\cdot t.$

\medskip
\noindent {\bf Case}: $\delta=t'.$

\noindent {\bf Subcase: $m_1 = 2$.} Then $w=\delta w_{\lambda_1,\epsilon_1}$ is given by
$$e_1 \mapsto (-1)^{|\epsilon|+1} e_2,$$
$$ e_{2} \mapsto e_{3}, e_{3} \mapsto e_{4}, \ldots, e_{n_1-1} \mapsto e_{n_1}, e_{n_1} \mapsto (-1)^{\epsilon_1+1} e_{1} \ .$$  
These cases  are already covered by the previous one.

\smallskip
\noindent {\bf Subcase: $m_1 > 2.$}

In this case, one reduces by Lemma \ref{LemmaB_l} to the situation of $w=t'$ resp. $w=t'\cdot t.$  

\end{proof}
Note that the $x$ in Proposition \ref{PropositionD_l} lies in $C$.

\end{document}